\newtheorem{theo}{Theorem}[section]
\newtheorem{prop}[theo]{Proposition}
\newtheorem{lemm}[theo]{Lemma}
\theoremstyle{definition}
\newtheorem{defi}[theo]{Definition}
\theoremstyle{remark}
\newtheorem{rema}[theo]{Remark}
\newcommand{\bb}[1]{\mathbb{#1}}
\newcommand{\al}[1]{\mathcal{#1}}
\newcommand{\sr}[1]{\mathscr{#1}}
\newcommand{\ak}[1]{\mathfrak{#1}}
\newcommand{\gen}[1]{\left\langle #1\right\rangle}
\newcommand{\sm}{\smallsetminus}
\newcommand{\id}{{\rm id}}
\newcommand{\ra}{\rightarrow}
\newcommand{\lra}{\longrightarrow}
\newcommand{\x}[1]{\text{#1}}
\begin{document}
\title{Invariant vector bundles and Hitchin systems}
\author{Zakaria OUARAS \and Hacen ZELACI}
\address{CNRS-Laboratoire AGM-Universit\'{e} de Cergy-Pontoise, 2 Avenue Adolphe Chauvin, 95302, Cergy-Pontoise, France.}
\curraddr{}
\email{zakaria.ouaras@cyu.fr}
\address{Mathematical departement, El Oued University, N48, 39000, EL Oued, Algeria.}
\email{zelaci-hacen@univ-eloued.dz}
\date{\today}
\thanks{}
\subjclass[2010]{Primary 14D20, 14H60.}

\begin{abstract}
Let $X\ra Y$ be a Galois cover with Galois group $\Gamma$, where $X$ and $Y$ are smooth complex projective curve of genus $\geqslant 2$. 
In this paper, we study the moduli spaces of semistable $\Gamma-$invariant vector bundles on $X$ and classify their connected components. We also study the Hitchin systems on these moduli spaces and determine their fibers in the smooth case.
\end{abstract}
\maketitle
\tableofcontents

\section{Introduction}

%%%%%%%%%%%%%%%%%%%%%%%%%%%%%%%%%%%%%%%%%%%%%%%%%
% Parahoric torsors over smooth algebraic curves have continued to attract significant attention over the past decade.  In this paper, we focus on a particular instance of this theory, namely the $\Gamma$-invariant vector bundles over a smooth projective curve $X$, where $\Gamma$ is a finite group associated to the Galois cover $\pi: X \to Y$. We classify the connected components of the moduli space of these bundles. We also study the Hitchin system over this moduli space.  

% In \cite{Z}, the second author introduced and studied the Hitchin system for invariant vector bundles under the action of an involution on the curve $X$, this corresponds to the action of the group $\mathbb{Z}/2\mathbb{Z}$. Here, we extend this study to the case of an arbitrary finite group $\Gamma$.  

Parahoric torsors over smooth algebraic curves have been a subject of intense study over the past decade due to their rich geometric and representation-theoretic properties. These objects arise naturally in the context of moduli spaces of vector bundles and Higgs bundles, particularly in the study of Galois covers. In this paper, we focus on a specific aspect of this theory: \( \Gamma \)-invariant vector bundles over a smooth projective curve \( X \), where \( \Gamma \) is a finite group acting on \( X \) via a Galois cover \( \pi: X \to Y \).  \smallskip

This work is built on the previous work of the second author in \cite{Z},  where the Hitchin system for vector bundles invariant under the action of an involution on the curve  \( X \) (corresponding to the group \( \mathbb{Z}/2\mathbb{Z} \)) was introduced and studied.  We extend this study to the case of an arbitrary finite group \( \Gamma \). The goal of the paper is to study the moduli space of these bundles, classify their connected components, and study the  Hitchin system over these moduli spaces.\smallskip%  thus generalizing the results to a broader context.\\

% Seshadri \cite{MS} established a correspondence between $\Gamma-$invariant vector bundles of fixed type $\theta$ over the curve $X$ and parabolic vector bundles over the quotient curve $Y=X/\Gamma$ (for further details, see Appendix \ref{Seshadri correspondence}). In \cite{XWX} the authors studied the cotangent space of the moduli space of parabolic bundles, called the space of strongly parabolic Higgs bundles, and proved that it forms an integrable system for any choice of parabolic weights. In particular, they showed that the parabolic spectral curves are generally smooth for full-flag case and singular otherwise. The case of $\Gamma-$invariant bundle  is richer in the sense that there are more  cases where a general  spectral curve is smooth; see Theorem \ref{smooth}. In these cases,  we describe the fibers of the $\Gamma-$invariant Hitchin system as $\Gamma-$invariant line bundles on the spectral curve equipped with its $\Gamma-$action.\\

% There is another type of parabolic Higgs bundles, called weak parabolic Higgs bundles, where the Higgs field preserves the filtration. The associated Hitchin system was studied by Logares and Martens in \cite{LM}, where they calculated the number of connected components of the fibers of this system and studied the Poisson structure of this space.\\ 

The $\Gamma-$invariant vector bundles are strongly related to parabolic vector bundles. In fact, in \cite{MS},  Seshadri established a correspondence between the moduli space of \( \Gamma \)-invariant vector bundles of a fixed type over the curve \( X \) and the moduli space of parabolic vector bundles over the quotient curve \( Y = X / \Gamma \) of a specified parabolic structure associated to the fixed type.  More details can be found in Appendix \ref{Seshadri correspondence}.  The study of the Hitchin system on the moduli spaces of parabolic bundles was done \cite{XWX},  where the authors studied the cotangent space of the moduli space of parabolic bundles, known as the space of strongly parabolic Higgs bundles,  and showed that it forms a completely integrable system for any choice of parabolic weights. In particular, they showed that the parabolic spectral curves are generally smooth in the full-flag case and singular otherwise.\smallskip

Furthermore, the authors explore the Hitchin system for another class of parabolic Higgs bundles, known as weak parabolic Higgs bundles (the Higgs field preserves the filtration). The associated Hitchin system was studied by Logares and Martens in \cite{LM},  where they computed the number of connected components of the fibers of this system and investigated the Poisson structure of the space. \smallskip

The situation becomes more intricate in the case of \( \Gamma \)-invariant vector bundles. Here,  the moduli space exhibits a richer structure, with more cases in which the spectral curve remains smooth (see Theorem \ref{smooth}). In these smooth cases, we describe the fibers of the \( \Gamma \)-invariant Hitchin system as \( \Gamma \)-invariant line bundles over the spectral curve,  equipped with the induced \( \Gamma \)-action for a type that we describe.\smallskip

To state our main results, we need to introduce some notation. Let $\pi: X \to Y$ be a Galois cover of degree-$n$ between irreducible smooth algebraic curves with Galois group $\Gamma$. Denote the ramification divisor by $R = \sum_i m_i p_i \subset X$, and let $m = \deg R$. By the Hurwitz formula, we have 
\[
2(g_X - 1) = 2n(g_Y - 1) + m.
\]
A $\Gamma-$invariant vector bundle is a vector bundle $E$ such that $\gamma^*E\cong E$, for any $\gamma\in \Gamma$.
For each $\Gamma-$invariant vector bundle over $X$, we associate a topological invariant that is called a \emph{type} and denoted $\theta$ (see Section \ref{section2} for more details). Fixing such a type, we have a corresponding moduli space of $\Gamma-$invariant bundles, denoted $\al U_X^{\Gamma,\theta}(r,d)$. These moduli spaces have been constructed and studied in \cite{S}, \cite{MS} and in a more general context in \cite{BS}.\smallskip

%%%%%%%%%%%%%%%%%%%
In a first stage, we compute the dimension of these moduli spaces by studying the infinitesimal deformations of such bundles (Theorem (\ref{dim})). \smallskip

Then, we consider the Hitchin morphism on these moduli spaces. We show that it is equivariant with respect to the $\Gamma-$action and show the following:
\begin{theo}[Theorem (\ref{hitchin})]
    For any type $\theta$, there exists a subspace $\al W^\theta\subset \al W$ such that 
    \begin{itemize}
        \item [$(i)$] The Hitchin system induces a map \(\al H_{\theta}:T^*\al U_X^{\Gamma,\theta}(r,d)\ra \al W^\theta\).
        \item [$(ii)$] We have \(\dim \al W^{\theta}=\dim \al U_X^{\Gamma,\theta}(r,d)\).
    \end{itemize}
\end{theo}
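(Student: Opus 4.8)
The plan is to realize the $\Gamma$-invariant Hitchin map as the restriction of the ordinary Hitchin map to the equivariant locus, and then to reduce the equality of dimensions in $(ii)$ to a local computation at the ramification points that is governed by the type $\theta$.

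First I would identify the cotangent space. By the deformation theory recalled in Section \ref{section2}, the tangent space to $\al U_X^{\Gamma,\theta}(r,d)$ at a stable point $[E]$ is the $\Gamma$-invariant subspace $H^1(X,\x{End}(E))^\Gamma$. Since $\Gamma$ is finite and we work over $\bb C$, the averaging projector splits off invariants as a direct summand, so taking invariants is exact and commutes with Serre duality. Hence
\[
T^*_{[E]}\al U_X^{\Gamma,\theta}(r,d)\cong \left(H^0(X,\x{End}(E)\otimes K_X)\right)^\Gamma,
\]
that is, a point of the cotangent bundle is a $\Gamma$-equivariant Higgs field $\phi\colon E\to E\otimes K_X$. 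This reinterprets $T^*\al U_X^{\Gamma,\theta}(r,d)$ as the moduli of $\Gamma$-invariant Higgs bundles of type $\theta$.

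Next I would restrict the ordinary Hitchin map $\al H\colon (E,\phi)\mapsto (\x{tr}\,\phi,\dots,\det\phi)\in\al W=\bigoplus_{i=1}^r H^0(X,K_X^{\,i})$. The $\Gamma$-action on $X$ induces an action on each $H^0(X,K_X^{\,i})$, and since pullback by $\gamma$ sends $(E,\phi)$ to $(\gamma^*E,\gamma^*\phi)$, the map $\al H$ is $\Gamma$-equivariant; for an equivariant $\phi$ the characteristic coefficients are therefore $\Gamma$-invariant sections, so $\al H$ lands in $\al W^\Gamma$. But equivariance is also a \emph{local} constraint: at a ramification point $p$ with cyclic isotropy $\Gamma_p$, the type $\theta$ prescribes the isotypic decomposition of $E_p$, while $\Gamma_p$ acts on $(K_X)_p$ through a fixed character; equivariance forces $\phi(p)$ to shift isotypic components by that character, which constrains the order of vanishing of each $\x{tr}\,\wedge^i\phi$ at $p$. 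I would define $\al W^\theta\subseteq\al W^\Gamma$ to be the subspace cut out by exactly these local conditions; then $\al H$ restricts to $\al H_\theta\colon T^*\al U_X^{\Gamma,\theta}(r,d)\to\al W^\theta$, proving $(i)$.

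For $(ii)$ I would compute $\dim\al W^\theta$ by descent. A $\Gamma$-invariant section of $K_X^{\,i}$ descends to a section of $K_Y^{\,i}$ twisted by an explicit divisor supported on the branch locus, with pole order $\lfloor i(e_j-1)/e_j\rfloor$ at the branch point $q_j$ of ramification index $e_j$; the type $\theta$ then lowers these pole orders. Summing the resulting Riemann--Roch dimensions on $Y$ for $i=1,\dots,r$ yields $\dim\al W^\theta$, which I would compare with the formula for $\dim\al U_X^{\Gamma,\theta}(r,d)$ from Theorem \ref{dim}. The main obstacle is precisely this matching: one must show that the local representation-theoretic corrections coming from $\theta$ enter the two counts in exactly the same way, so that the half-dimensionality $\dim\al W^\theta=\dim\al U_X^{\Gamma,\theta}(r,d)$ characteristic of an integrable system survives passage to the $\Gamma$-invariant locus. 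I expect this to reduce, via the Hurwitz relation $2(g_X-1)=2n(g_Y-1)+m$ together with equivariant Riemann--Roch, to a point-by-point identity at each ramification point, balancing the contribution of the isotropy representation to the deformation space of $E$ against its contribution to the space of invariant characteristic coefficients.
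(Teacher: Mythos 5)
Your architecture is exactly the paper's: identify the cotangent fibre with $\Gamma$-invariant Higgs fields, observe that equivariance of the Hitchin map forces the characteristic coefficients into $\al W^\Gamma$ and, more finely, forces vanishing at the ramification points governed by the local isotropy representation; then define $\al W^\theta$ by those local conditions, descend each summand to $Y$, and match the Riemann--Roch count against Theorem \ref{dim}. So there is no divergence of route to report.

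The gap is that the two computations carrying all the content of the theorem are only announced, not performed. For $(i)$, saying that $\al W^\theta$ is ``cut out by exactly these local conditions'' is not yet a definition: one must actually determine the minimal order of vanishing of $\mathrm{Tr}(\wedge^i\phi)$ at each $p\in R$. The paper does this by first proving that in the eigenbasis of $\psi_{\gamma_p}$ the matrix of $\phi$ has block $(i,j)$ with entry valuations $\geqslant l_j-l_i-1 \pmod{e_p}$ (Lemma \ref{formphi}), and then running a combinatorial argument over the summands of the principal $i$-minors to obtain $\nu(\mathrm{Tr}(\wedge^i\phi))\geqslant e_p\delta_p(i)-i$, where $\delta_p$ is the column-filling function of the Young diagram of $\theta_p$; this is what produces the divisors $D_i=\sum_p(i-e_p\delta_p(i))p$ and hence $\al W^\theta$. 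For $(ii)$, the ``point-by-point identity'' you hope for is the Young-diagram identity
\[
\sum_{i=1}^r\bigl(i-\delta_q(i)\bigr)=\tfrac{1}{2}\Bigl(r^2-\sum_{l}k_{q,l}^2\Bigr),
\]
which is what makes the local correction in $\dim\al W^\theta$ equal to the local correction in Theorem \ref{dim}. Neither step is automatic, and without the first one you cannot even write down the subspace whose dimension you are supposed to compute; imposing conditions that are merely \emph{necessary} but not sharp would give a $\al W^\theta$ that still satisfies $(i)$ but fails $(ii)$. As it stands the proposal proves only that the image of $\al H_\theta$ lies in $\al W^\Gamma$, which is the easy part.
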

%Moreover, we determine the types $\theta$, where for general $s\in\al W^\theta$, the associated spectral curve is smooth.

We then study the smoothness of the spectral curves associated with spectral data $s\in \al W^\theta$. In particular, we determine the types $\theta$ such that, for general $s\in \al W^\theta$, the associated spectral curve is smooth. Moreover, we study the fibers of the Hichin morphism in the smooth case.  
\begin{theo}[Theorem (\ref{main1})]
    For any smooth type $\theta$, there exists a type $\tilde\theta$ of $\Gamma-$invariant line bundles on the spectral curve $X_s$, such that the fiber $\al H_\theta^{-1}(s)$ is identified with a non-empty open subset of ${\rm Pic}^{c,\tilde \theta}(X_s)$ and $$\dim{\rm Pic}^{c,\tilde \theta}(X_s)=\dim\al U_X^{\Gamma,\theta}(r,d).$$ 
    In particular, the map \(\al H_{\theta}:T^*\al U_X^{\Gamma,\theta}(r,d)\ra \al W^\theta\) is an algebraic completely integrable system.
\end{theo}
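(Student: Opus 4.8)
The plan is to prove the statement through the Beauville--Narasimhan--Ramanan (BNR) spectral correspondence, adapted to the $\Gamma$-equivariant setting. First I would recall the spectral construction: to a point $s\in \al W^\theta$ one associates the spectral curve $X_s\subset \x{Tot}(K_X)$ cut out by the characteristic polynomial, with projection $p:X_s\ra X$ of degree $r$. For $X_s$ smooth and integral, the classical BNR correspondence identifies the fiber of the ordinary Hitchin map over $s$ with an open subset of ${\rm Pic}^{c}(X_s)$ via pushforward: $L\mapsto p_*L$, equipped with the Higgs field induced by the tautological section $\eta\in H^0(X_s,p^*K_X)$, where the degree $c$ is fixed by Riemann--Roch so that $\x{rk}\,p_*L=r$ and $\deg p_*L=d$. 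This reduces the problem to tracking the extra $\Gamma$-structure through this equivalence.

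Next I would transport the $\Gamma$-action to the spectral curve. Since the Hitchin morphism is $\Gamma$-equivariant and $s\in \al W^\theta$ lies in the invariant locus, the $\Gamma$-action on $\x{Tot}(K_X)$ (induced by the action on $X$ together with the canonical linearization of $K_X$) preserves $X_s$, so $\Gamma$ acts on $X_s$ covering its action on $X$, and $p$ becomes $\Gamma$-equivariant. The key observation is then that a Higgs pair $(E,\phi)$ in the fiber is $\Gamma$-invariant precisely when the corresponding line bundle $L$ on $X_s$ is $\Gamma$-invariant: indeed $\gamma^*(E,\phi)$ has characteristic data $\gamma^*s=s$ and spectral line bundle $\gamma^*L$, so an isomorphism $\gamma^*(E,\phi)\cong (E,\phi)$ intertwining $\phi$ corresponds, under the fully faithful $p_*$, to an isomorphism $\gamma^*L\cong L$. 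Hence $\al H_\theta^{-1}(s)$ is identified with the locus of $\Gamma$-invariant line bundles $L$ on $X_s$ whose pushforward is a stable $\Gamma$-invariant bundle, i.e. with a non-empty open subset of some ${\rm Pic}^{c,\tilde\theta}(X_s)$.

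The technical core, and the step I expect to be the main obstacle, is determining the type $\tilde\theta$ and matching dimensions. The type $\tilde\theta$ records, at each point of $X_s$ carrying a nontrivial stabilizer, the character by which that stabilizer acts on the fiber of $L$; these points lie over the ramification points of $\pi$, and the decomposition $E_x=\bigoplus_{x'\in p^{-1}(x)}L_{x'}$ together with the $\Gamma$-action on the spectral fiber determines how the local characters of $L$ assemble into the stabilizer representation recorded by $\theta$. I would carry out this local analysis at the finitely many ramification points, using the smoothness of $X_s$ to guarantee that each spectral point is either free or carries a single well-defined character, and thereby read off $\tilde\theta$ from $\theta$ explicitly. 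For the dimension, I would compute $\dim {\rm Pic}^{c,\tilde\theta}(X_s)=\dim H^1(X_s,\al O_{X_s})^\Gamma=g(X_s/\Gamma)$, via the identification of $\Gamma$-invariants in the cohomology with the cohomology of the quotient, and then verify through a Riemann--Roch and Hurwitz computation on the spectral cover that this equals $\dim \al W^\theta$, which by Theorem \ref{hitchin} equals $\dim\al U_X^{\Gamma,\theta}(r,d)$. Non-emptiness of the stable open subset follows from a standard stability argument for generic $s$.

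Finally, the completely integrable system statement follows formally. By Theorem \ref{hitchin} we have $\dim\al W^\theta=\dim\al U_X^{\Gamma,\theta}(r,d)=\tfrac12\dim T^*\al U_X^{\Gamma,\theta}(r,d)$, so the base has exactly half the dimension of the total space. The generic fibers are open subsets of ${\rm Pic}^{c,\tilde\theta}(X_s)$, which are torsors under the abelian variety given by the identity component of $({\rm Pic}^{0}(X_s))^\Gamma$, of precisely this dimension. The canonical symplectic form on the cotangent bundle restricts to make these fibers Lagrangian, by the same computation as in the non-equivariant Hitchin system restricted to the $\Gamma$-invariant locus. Together these properties exhibit $\al H_\theta$ as an algebraic completely integrable system.
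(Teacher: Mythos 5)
Your proposal follows essentially the same route as the paper: the $\Gamma$-equivariant BNR correspondence identifying the fiber with $\Gamma$-invariant line bundles on the smooth spectral curve, a local analysis at the points of $q^{-1}(R)$ to read off the type $\tilde\theta$ from $\theta$, and a Hurwitz/genus computation identifying $\dim{\rm Pic}^{c,\tilde\theta}(X_s)$ with $g(X_s/\Gamma)$ and hence with $\dim\al U_X^{\Gamma,\theta}(r,d)$. The paper executes the local analysis by an explicit three-case discussion according to the shape of the Young diagram of a smooth type (rectangular, one cell missing, one cell extra), which is the part your sketch leaves to be carried out, but the strategy and the dimension count coincide.
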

    We also show that the canonical pushforward rational map $${\rm Pic}^{c,\tilde \theta}(X_s)\lra \al U_X^{\Gamma,\theta}(r,d)$$ is dominant. In particular, we deduce the connectedness of this moduli space.\\

    The paper is organized as follows.  In the first section, we recall the definition of $\Gamma-$invariant vector bundles and describe the space of infinitesimal deformation of such bundles as a $\Gamma-$invariant cohomology group hence calculate the dimension of the moduli space by Lefschetz fixed point formula and representation of finite groups. In the second section, we describe the Hitchin base of $\Gamma-$invariant Higgs fields for a fixed type by calculating the valuation of the coefficients of the characteristic polynomial, we get a $\Gamma-$invariant Hitchin system. In the third section, we describe the types for which the spectral curve is smooth and show that in this case the system we get is a completely integrable algebraic system equivalently $\Gamma-$invariant BNR-correspondence. 
\bigskip

{\bf Acknowledgments.} 
The second author acknowledges support from the CNRS (Chaire Maurice Audin) for a research visit to the Laboratoire J.A. Dieudonné at Université Côte d'Azur in November 2022, when this project was initiated. We are also very grateful to Christian Pauly for his valuable discussions.
\smallskip
\section{$\Gamma-$invariant vector bundles and their moduli spaces}\label{section2}
Let $X\ra Y$ be a Galois cover with Galois group $\Gamma$ of order $n$. A $\Gamma-$\emph{linearization}, or simply linearization, is a family of isomorphisms $\psi_\gamma$ indexed by $\Gamma_p$ for $p\in R$, such that $$\psi_\gamma:\gamma^*E\stackrel{\sim}{\lra} E$$  and $\psi_{e}=\id_E$  and $\psi_{\gamma\eta}=\psi_\eta\circ \eta^*\psi_\gamma$ for any $\gamma,\eta\in \Gamma_p$. In particular, we have $$\psi_{\gamma^k}=\psi_\gamma\circ \gamma^*\psi_\gamma\circ\cdots\circ(\gamma^k)^*\psi_\gamma.$$

\begin{defi}
   A \emph{$\Gamma$-invariant vector bundle}  is a  vector bundle $E$ over $X$  equipped with a $\Gamma$-linearization.
\end{defi}

\begin{lemm}
    The canonical line bundle $K_X$ is a $\Gamma$-invariant line bundle.
\end{lemm}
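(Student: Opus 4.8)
The plan is to produce the required linearization directly from the differentials of the automorphisms in $\Gamma$, which act canonically on $1$-forms. Recall that for any morphism $f\colon X\to X'$ of smooth curves there is a canonical sheaf map $df\colon f^*\Omega^1_{X'}\to\Omega^1_X$, and that $df$ is an isomorphism whenever $f$ is an isomorphism. Since each $\gamma\in\Gamma$ acts on $X$ as an automorphism, I would set $\psi_\gamma:=d\gamma\colon\gamma^*K_X\stackrel{\sim}{\lra}K_X$, where $K_X=\Omega^1_X$. This at once yields isomorphisms $\gamma^*K_X\cong K_X$ for every $\gamma$, in particular over every stabilizer $\Gamma_p$ with $p\in R$.

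Next I would verify the two conditions defining a linearization. The normalization $\psi_e=\id_{K_X}$ holds because $d(\id_X)=\id_{\Omega^1_X}$. For the cocycle relation, the key input is the functoriality (chain rule) of K\"ahler differentials: writing $\gamma\eta=\gamma\circ\eta$, so that $(\gamma\eta)^*\Omega^1_X=\eta^*\gamma^*\Omega^1_X$, one has the factorization
\[
d(\gamma\eta)=d\eta\circ\eta^*(d\gamma)\colon\ \eta^*\gamma^*\Omega^1_X\longrightarrow\eta^*\Omega^1_X\longrightarrow\Omega^1_X.
\]
Rewriting this in terms of $\psi$ gives exactly $\psi_{\gamma\eta}=\psi_\eta\circ\eta^*\psi_\gamma$, which is the cocycle condition in the definition; iterating it recovers the stated formula for $\psi_{\gamma^k}$.

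The only point requiring genuine care is matching the composition order in the cocycle identity to the paper's convention, but this is entirely dictated by the direction of the canonical map $df$ together with the chain rule above. There is no geometric obstacle: each $\gamma$ is a global automorphism of the smooth curve $X$, so $d\gamma$ is an isomorphism at every point of $X$, including over the branch locus of $\pi$. Hence $K_X$ carries a canonical $\Gamma$-linearization and is a $\Gamma$-invariant line bundle.
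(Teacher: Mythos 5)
Your proof is correct, but it takes a different route from the paper's. The paper invokes the Hurwitz isomorphism $K_X \simeq \pi^* K_Y \otimes \mathcal{O}_X(R)$ and observes that both factors carry natural $\Gamma$-linearizations ($\pi^*K_Y$ because it is pulled back from the quotient, $\mathcal{O}_X(R)$ because $R$ is a $\Gamma$-invariant divisor), so their tensor product does too. You instead build the linearization intrinsically, setting $\psi_\gamma := d\gamma \colon \gamma^*\Omega^1_X \to \Omega^1_X$ and checking the cocycle condition $\psi_{\gamma\eta}=\psi_\eta\circ\eta^*\psi_\gamma$ via the chain rule for K\"ahler differentials. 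Your construction has the advantage of making the later Remark in the paper transparent: at a ramification point $p$ where a generator $\gamma_p$ of $\Gamma_p$ acts locally as $t\mapsto \xi_p t$, one has $d\gamma_p(dt)=\xi_p\,dt$, so the canonical linearization is literally multiplication by $\xi_p$ on the fiber, giving the type $\theta_p=(1,0,\dots,0)$ with no further computation. The paper's route is shorter on the page but pushes the verification of the linearization axioms onto the two tensor factors, and recovering the local eigenvalue $\xi_p$ from that decomposition requires tracking the action on a local generator $t^{-(e_p-1)}$ of $\mathcal{O}_X(R)$. The two linearizations agree, so nothing is lost either way.
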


\begin{proof}
By the Hurwitz theorem, we have 
\[
K_X \simeq \pi^* K_Y \otimes \mathcal{O}_X(R).
\]
Since $\mathcal{O}_X(R)$ is $\Gamma$-invariant, it follows that $K_X$ admits a $\Gamma-$linearization.
\end{proof}

The linearization of the canonical line bundle $K_X$ given in this lemma is called the \emph{canonical linearization}.

\begin{lemm}
Let $p, q \in R$ be such that $\pi(p) = \pi(q)$. Then the isotropy subgroups of $p$ and $q$ are conjugate. In particular, the fiber of a branch point $x = \pi(p)$ consists of exactly $n/e_p$ points, where $e_p$ is the ramification index of $p$.
\end{lemm}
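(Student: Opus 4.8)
The plan is to reduce everything to the orbit--stabilizer theorem together with the local description of a Galois cover at a fixed point. First I would use that $\pi$ is exactly the quotient map $X\to X/\Gamma=Y$, so that the fibers of $\pi$ are precisely the $\Gamma$-orbits in $X$. Consequently the hypothesis $\pi(p)=\pi(q)$ says that $p$ and $q$ lie in the same orbit, i.e. there is some $\gamma\in\Gamma$ with $q=\gamma\cdot p$. This is the only place where the Galois (as opposed to merely \'etale or tame) hypothesis is used to guarantee transitivity on fibers.

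Next, for the conjugacy of isotropy subgroups, I would verify the general identity $\Gamma_{\gamma\cdot p}=\gamma\,\Gamma_p\,\gamma^{-1}$ by a one-line computation: an element $\delta\in\Gamma$ fixes $\gamma\cdot p$ if and only if $\gamma^{-1}\delta\gamma$ fixes $p$. Applying this with $q=\gamma\cdot p$ gives $\Gamma_q=\gamma\,\Gamma_p\,\gamma^{-1}$, so the two isotropy groups are conjugate. In particular they have equal order, whence $e_p=e_q$, and this already shows that the quantity $n/e_p$ does not depend on the chosen point of the fiber.

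For the count of points in a fiber I would invoke the orbit--stabilizer theorem: the fiber $\pi^{-1}(x)=\Gamma\cdot p$ has cardinality $|\Gamma|/|\Gamma_p|=n/|\Gamma_p|$. It then remains to identify $|\Gamma_p|$ with the ramification index $e_p$, which is the one genuinely geometric step. Working over $\mathbb{C}$, the isotropy group $\Gamma_p$ acts on the cotangent line $\mathfrak{m}_p/\mathfrak{m}_p^2\cong\mathbb{C}$ through a character $\Gamma_p\to\mathbb{C}^\times$; since an automorphism of a smooth curve fixing $p$ and acting trivially on the tangent line must be the identity (one linearizes the finite-order action on the completed local ring $\widehat{\mathcal{O}}_{X,p}\cong\mathbb{C}[[z]]$), this character is injective and $\Gamma_p$ is cyclic of some order $e$. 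Picking a uniformizer $z$ on which a generator acts by a primitive $e$-th root of unity, the ring of invariants is generated by $z^{e}$, so locally the cover is $z\mapsto z^{e}$ and the ramification index is exactly $e=|\Gamma_p|$. Combining with the orbit count gives $|\pi^{-1}(x)|=n/e_p$.

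I expect the main obstacle to be precisely this last identification $e_p=|\Gamma_p|$: the orbit--stabilizer bookkeeping and the conjugacy computation are formal, but establishing that the local model is $z\mapsto z^{|\Gamma_p|}$ relies on the faithfulness of the $\Gamma_p$-action on the tangent line (hence the cyclicity of $\Gamma_p$) and on the linearizability of a finite-order automorphism near a fixed point in characteristic zero. Once these local facts are in place, both assertions of the lemma follow at once.
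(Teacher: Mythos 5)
Your proof is correct. Note that the paper states this lemma without any proof, treating it as a standard fact (it only remarks later, citing Seshadri, that isotropy subgroups of Galois covers are cyclic), so there is no argument in the paper to compare against; your write-up supplies exactly the standard missing details. The three steps --- transitivity of $\Gamma$ on fibers of $X\to X/\Gamma$, the identity $\Gamma_{\gamma\cdot p}=\gamma\,\Gamma_p\,\gamma^{-1}$, and the orbit--stabilizer count --- are all sound, and you correctly isolate the one genuinely geometric point, namely the identification $e_p=|\Gamma_p|$ via the faithfulness of the $\Gamma_p$-action on the cotangent line and the local model $z\mapsto z^{e}$, which is valid here since the curves are over $\mathbb{C}$ (this also recovers the cyclicity of $\Gamma_p$ that the paper invokes).
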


Let $E$ be a $\Gamma$-invariant vector bundle, and assume that it is stable. For $p \in R$, denote by $\gamma_p \in \Gamma$ a generator of the isotropy subgroup $\Gamma_p$, which we assume to be of order $e_p$. It is well known that the isotropic subgroup of a Galois covering is cyclic; see \cite{Seshadri}. The linearization $\psi_{\gamma_p}$ at $p$ gives an automorphism of the fiber $E_p$ of order $e_p$. Let ${\rm U}_{e_p} = \{\xi_{p} = e^{2\pi i /e_p}, \dots, \xi_{p}^{e_p}=1\}$ denote the set of $e_p$-th roots of unity. % where  
% \[\xi_{p,l} = e^{2\pi i (l-1)/e_p}, \quad l = 1, \dots, e_p.\]
We order these roots increasingly with respect to their arguments. 

Since the eigenvalues of $\psi_{\gamma_p}$ belong to ${\rm U}_{e_p}$, we associate to $\psi_{\gamma_p}$ the vector
\[
\theta_p = (k_{p,1}, \dots, k_{p,e_p}),
\]
where $k_{p,l}$ is the multiplicity of $\xi_{p}^{l}$ as an eigenvalue of $\psi_{\gamma_p}$, for $l = 1, \dots, e_p$.  
Note that some $k_{p,l}$ may vanish and that they satisfy 
\[
\sum_{l=1}^{e_p} k_{p,l} = r.
\]  
We denote by $f_p$ the number of nonzero $k_{p,l}$. Clearly, we have $f_p \leqslant e_p$. \smallskip

Let $(\psi_\gamma)$ be a linearization on $E$, and let $\xi_p = e^{2\pi i / e_p}$. For any \( j \in \{1, \dots, e_p\} \), we can construct another linearization by multiplying $\psi_\gamma$, for $\gamma\in \Gamma_p$, with \(\xi_p^j\). %This amounts to a cyclic shifting of  the vector 
%as follows:  \[(\xi^{j n / e_\gamma} \psi_{\gamma}),\]
%where \( e_\gamma \) is the order of \( \gamma \). 
\smallskip

Now, the vectors associated with this new linearization are obtained from the vectors \( \theta_p = (k_{p,1}, \dots, k_{p,e_p}) \) by shifting all coordinates by $j$ positions to the right in a cyclic manner:
\[\theta^j_p = (k_{p,e_p+1-j}, k_{p,e_p+2-j}, \dots, k_{p,e_p}, k_{p,1}, \dots, k_{p,e_p-j}).\]
This defines an equivalence relation on such vectors:  
\[
(v_p)_{p\in R} \sim (w_p)_{p\in R} \quad \Longleftrightarrow \quad \exists j \text{ such that } (w_p^j)_{p\in R} = (v_p)_{p\in R}.
\]

\begin{defi}
We define the \emph{type} of the $\Gamma$-invariant vector bundle $E$, denoted $\theta$, as the equivalence class, modulo the above equivalence relation, of  ${(\theta_p)_{ p \in R}}$. We call $\theta_p$ the local type at $p$.
\end{defi}

To each type $\theta$, we associate a family of  Young diagrams, at each ramification point $p$, it has  $f_p$ rows:  
\[
\begin{ytableau}
  \, & \, & \, & \, & \, & \, & \, & \, \\
  \, & \, & \, & \, & \, & \, \\
  \, & \, & \, & \, & \, \\ 
  \, & \, & \, \\
  \, 
\end{ytableau}
\]
where the rows have lengths $k_{p,l}$ (ordered decreasingly).  \smallskip

Note that this correspondence is not injective, as the order of the integers $k_{p,l}$ is not preserved. This association is useful in determining the Hitchin base, as we will explain in Section~\ref{base}.

\begin{rema}
    Note that the canonical linearization on the canonical line bundle $K_X$ is equal to multiplication with $\xi_p$ in the fiber over any ramification point $p$. So its type is $\theta_p=(1,0,\cdots,0)$.
\end{rema}
\bigskip
%\section{The moduli space of $\Gamma-$invariant bundles}

The moduli space of semistable $(\Gamma,G)$-bundles was constructed and studied by Balaji-Seshadri in a more general context.\smallskip

In this section, we describe the space of infinitesimal deformations of $\Gamma$-invariant bundles and derive a formula for its dimension. We start by defining the semistability of these bundles. We denote by $\mathcal{U}_X^{\Gamma,\theta}(r,d)$ the moduli space of $\Gamma$-invariant vector bundles. \smallskip

Let $E$ be a $\Gamma$-invariant bundle. A subbundle $F$ of $E$ is called $\Gamma$-invariant if for any $\gamma \in \Gamma$, we have $\psi_\gamma(\gamma^*F) \subset F$. A $\Gamma$-invariant bundle $E$ is called semistable if for any proper subbundle $F$ (not necessarily $\Gamma-$invariant subbundle), we have
\[
    \mu(F) \leqslant \mu(E).
\]
A $\Gamma$-invariant bundle $E$ is called stable if for any proper $\Gamma-$invariant subbundle $F$, we have
 \[
 \mu(F) < \mu(E)
 \]
%It is clear that a (semi)stable vector bundle which is $\Gamma$-invariant is (semi)stable as a $\Gamma$-invariant bundle. The moduli space of $\Gamma$-invariant vector bundles was constructed by Balaji-Seshadri \cite{BS}. 
We denote by $\mathcal{U}_X^{\Gamma,\theta}(r,d)$ the moduli space of $\Gamma$-invariant vector bundles of type $\theta$.\smallskip

We saw in the last section that the canonical line bundle $K_X$ has a canonical linearization. Fixing a type $\theta$, for any $\Gamma$-invariant bundle $E$, the group $\Gamma$ and $\psi$ induce an action on $H^{i}(X,E)$ (for $i=0,1$). In particular, this action exists on $H^1(X, \text{End}(E))$ and its dual $H^0(X, \text{End}(E) \otimes K_X)$, where $K_X$ is equipped with its canonical linearization.

\begin{lemm}
    Serre duality is equivariant with respect to the canonical linearization on $K_X$.
\end{lemm}

\begin{proof}
Let $E$ be a $\Gamma$-invariant vector bundle on the curve $X$. Then Serre duality gives  
\[
    \left( H^1(X,E)^{\Gamma} \right)^* \simeq \left( H^1(Y,\pi^{\Gamma}_*(E) ) \right)^* \simeq  H^0(Y,\pi^{\Gamma}_*(E)^* \otimes K_Y ).
\]
Using the facts that 
\[
    \pi^{\Gamma}_*(K_X) \simeq K_Y, \quad \text{and} \quad \pi^{\Gamma}_*(E^* \otimes K_X) \simeq \pi^{\Gamma}_*(E)^* \otimes K_Y,
\]
we obtain  
\[
    H^0(Y,\pi^{\Gamma}_*(E)^* \otimes K_Y ) \simeq H^0(Y,\pi^{\Gamma}_*(E^* \otimes K_X) ).
\]
Hence,  
\[
    (H^0(X,E)^*)^\Gamma\simeq\left( H^1(X,E)^{\Gamma} \right)^* \simeq H^0(X,E^* \otimes K_X)^{\Gamma}.
\] 
Since the dual bundle $E^*$ is also $\Gamma$-invariant, its linearization is given by 
\[
    ^t\psi_\gamma^{-1}, \quad \forall \gamma \in \Gamma.
\]
\end{proof}

As a consequence of this lemma, we obtain the following isomorphism:
\[
    \left(H^1(X,\text{End}(E))^{\Gamma}\right)^* \simeq H^0(X,\text{End}(E) \otimes K_X)^{\Gamma},
\]
where $\text{End}(E) \otimes K_X$ is equipped with the natural $\Gamma$-action. An element in this space is called a \emph{$\Gamma$-invariant Higgs field} on $E$. \\

We can show this isomorphism using the cup product. We have the map:
\[
\begin{array}{ccccc}
\cup & : & H^1( X,{\rm End}(E))\otimes H^0\left(X,{\rm End}(E)K_X)\right)  & \longrightarrow &  H^1(X, K_X)\\
 & & f\otimes g& \longmapsto & f\cup g:= \mathrm{Tr}(f\circ g).
\end{array}
\] 
The cup product is \(\Gamma\)-equivariant, where the line bundle \( K_X \) is equipped with its natural linearization. Hence, we get a map on the \(\Gamma\)-invariant subspaces:
\[
\cup  :  H^1( X,{\rm End}(E))^{\Gamma}\otimes H^0\left(X,{\rm End}(E)K_X)\right)^{\Gamma}   \longrightarrow  H^1(X, K_X)^{\Gamma}.
\]
We calculate the right-hand side:
\[
 H^1(X, K_X)^{\Gamma} \simeq H^1(X, K_X(-R))^{\Gamma}=H^1(Y,K_Y) \simeq \mathbb{C}.
\]
Since the cup product is non-degenerate, this concludes the result.\\

% This result follows from the following theorem.
Now we describe the infinitesimal deformation of $\Gamma-$invariant vector bundle.
\begin{theo} 
    The infinitesimal deformations of a $\Gamma$-invariant vector bundle $(E,\psi)$ are parametrized by $H^1(X,{\rm End}(E))^{\Gamma}$.  Moreover, the tangent space  
\[
    T_E \mathcal{U}_X^{\Gamma,\theta}(r,d) =  H^1(X,{\rm End}(E))^{\Gamma}.
\]
\end{theo}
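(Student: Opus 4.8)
The plan is to combine the classical deformation theory of vector bundles with the vanishing of the group cohomology of the finite group $\Gamma$ in characteristic zero. First I would recall that first-order deformations of the underlying bundle $E$, i.e. locally free sheaves on $X\times\mathrm{Spec}(\mathbb{C}[\epsilon]/(\epsilon^2))$ flat over the base and restricting to $E$, are classified by $H^1(X,\mathrm{End}(E))$: on a trivializing cover $\{U_i\}$ with transition functions $g_{ij}$, such a deformation has the form $g_{ij}(1+\epsilon a_{ij})$ with $\{a_{ij}\}$ a \v{C}ech $1$-cocycle valued in $\mathrm{End}(E)$, well defined up to coboundaries. The linearization $\psi$ endows $\mathrm{End}(E)$ with its own $\Gamma$-linearization by conjugation, $\phi\mapsto \psi_\gamma\circ\gamma^*\phi\circ\psi_\gamma^{-1}$, and hence induces a $\Gamma$-action on $H^1(X,\mathrm{End}(E))$; the claim is that invariant deformations are exactly those lying in the fixed subspace $H^1(X,\mathrm{End}(E))^\Gamma$.

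For the necessity, I would observe that a deformation of the pair $(E,\psi)$ is a deformation $\mathcal{E}$ of $E$ together with isomorphisms $\Psi_\gamma\colon\gamma^*\mathcal{E}\xrightarrow{\sim}\mathcal{E}$ reducing to $\psi_\gamma$ modulo $\epsilon$ and satisfying the cocycle relation $\Psi_{\gamma\eta}=\Psi_\eta\circ\eta^*\Psi_\gamma$. The mere existence of $\Psi_\gamma$ forces $\gamma^*\mathcal{E}\cong\mathcal{E}$ as deformations of $E$; transporting the class of $\gamma^*\mathcal{E}$ through $\psi_\gamma$ this says precisely that the deformation class $\alpha\in H^1(X,\mathrm{End}(E))$ satisfies $\gamma\cdot\alpha=\alpha$ for every $\gamma$, so that $\alpha$ is $\Gamma$-invariant.

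The harder, sufficiency direction is where the main obstacle lies: given an invariant class $\alpha$, I must produce linearizations $\Psi_\gamma$ satisfying the cocycle relation, not merely isomorphisms $\gamma^*\mathcal{E}\cong\mathcal{E}$. Invariance guarantees that for each $\gamma$ some lift $\Psi_\gamma$ of $\psi_\gamma$ exists, but the composite $\Psi_{\gamma\eta}^{-1}\circ\Psi_\eta\circ\eta^*\Psi_\gamma$ is an automorphism of $\mathcal{E}$ that is the identity modulo $\epsilon$, hence of the form $1+\epsilon\, c(\gamma,\eta)$ with $c(\gamma,\eta)\in H^0(X,\mathrm{End}(E))$. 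A direct check shows that $c$ is a $2$-cocycle for $\Gamma$ with values in the $\mathbb{C}[\Gamma]$-module $H^0(X,\mathrm{End}(E))$, while the ambiguity in the choice of each $\Psi_\gamma$ is a $1$-cochain with the same coefficients. Because $\Gamma$ is finite and we work in characteristic zero, $H^i(\Gamma,H^0(X,\mathrm{End}(E)))=0$ for $i>0$ (one averages over $\Gamma$, using that $|\Gamma|$ is invertible), so $c$ is a coboundary and the $\Psi_\gamma$ can be corrected to satisfy the cocycle relation; the same vanishing shows the resulting linearized deformation is unique up to equivalence. This identifies the invariant first-order deformations of $(E,\psi)$ with $H^1(X,\mathrm{End}(E))^\Gamma$.

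Finally, since the Zariski tangent space to the moduli functor at $[E]$ is by definition the set of extensions of $[E]$ over the dual numbers, i.e. the first-order deformations of $(E,\psi)$, the computation above yields $T_E\mathcal{U}_X^{\Gamma,\theta}(r,d)=H^1(X,\mathrm{End}(E))^\Gamma$. I would remark that stability of $E$ gives $H^0(X,\mathrm{End}(E))=\mathbb{C}$ with trivial $\Gamma$-action, which makes the cohomological vanishing used above completely transparent.
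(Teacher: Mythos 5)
Your argument is correct, and it is organized differently from the paper's. The paper chooses a $\Gamma$-invariant affine cover trivializing $E$ from the outset, writes the deformation through transition maps $\tau_{\lambda,\mu}(m+\epsilon n)=m+\epsilon(\xi_{\lambda,\mu}(m)+n)$, and reads off directly that the resulting \v{C}ech $1$-cocycle $(\xi_{\lambda,\mu})$ is $\Gamma$-invariant; the converse direction --- that every invariant cocycle actually arises from a deformation of the pair $(E,\psi)$, i.e.\ that the linearization itself deforms compatibly --- is asserted without further detail. You instead separate the problem into two layers: first the classical identification of deformations of the bare bundle with $H^1(X,\mathrm{End}(E))$, then the lifting of the linearization $\psi_\gamma$ to the deformed bundle, with the obstruction to the cocycle relation $\Psi_{\gamma\eta}=\Psi_\eta\circ\eta^*\Psi_\gamma$ living in $H^2\bigl(\Gamma,H^0(X,\mathrm{End}(E))\bigr)$ and the ambiguity of the lift in $H^1\bigl(\Gamma,H^0(X,\mathrm{End}(E))\bigr)$, both of which vanish because $\Gamma$ is finite and the coefficients are a $\mathbb{C}$-vector space. (The group of automorphisms $1+\epsilon c$ is abelian since $\epsilon^2=0$, so the nonabelian subtleties of such lifting problems do not arise here.) This buys an actual proof of the step the paper leaves implicit and isolates exactly where finiteness of $\Gamma$ and characteristic zero enter; the paper's equivariant-cover computation is shorter and more concrete but hides the question of whether $\psi$ extends. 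Your closing observation that stability makes $H^0(X,\mathrm{End}(E))=\mathbb{C}$ with trivial $\Gamma$-action, so the relevant group cohomology is that of a trivial one-dimensional module, is a pleasant simplification and is consistent with the paper's own use of stability in the subsequent dimension count.
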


\begin{proof} 
Recall that an infinitesimal deformation of a $\Gamma$-invariant vector bundle $(E,\psi)$ is an isomorphism class of $\Gamma$-invariant vector bundles $(E_{\epsilon},\psi_{\epsilon})$ over $X \times \text{Spec}(\mathbb{C}[\epsilon])$, where the $\Gamma$-action on $\text{Spec}(\mathbb{C}[\epsilon])$ is trivial, and the pullback with the natural embedding $X \hookrightarrow X \times \text{Spec}(\mathbb{C})$ is isomorphic as a $\Gamma$-bundle to $(E,\psi)$. 

Take an affine cover $\mathcal{U} = (U_{\lambda,\mu})$ of $X$ invariant under the $\Gamma$-action such that $E \vert_{U_{\lambda,\mu}} \simeq \mathcal{O}_X^{\oplus r}$. Then, over $U_{\lambda,\mu} = \text{Spec}(A_{\lambda,\mu})$, the vector bundle $E$ is given by an $A_{\lambda,\mu}$-module $M_{\lambda,\mu}$ equipped with a $\Gamma$-action. The deformation $E_{\epsilon}$ is given by transition maps of the form:
\[
    \tau_{\lambda,\mu}(m+\epsilon n) = m + \epsilon(\xi_{\lambda,\mu}(m) + n).
\]
This defines a $1$-cocycle in $H^1(X, \text{End}(E))$ that is also $\Gamma$-invariant, thus belonging to $H^1(X, \text{End}(E))^{\Gamma}$. Conversely, any such cocycle corresponds to a deformation of $(E,\psi)$.
\end{proof}
  
 \begin{theo}\label{dim} For any type $\theta$ we have:
     \begin{align*}
     \dim  \left( \al U_X^{\Gamma,\theta}(r,d) \right)&= \frac{r^2}{n}(g_X-1)+1+\frac{1}{2\vert \Gamma \vert}  \sum_{p \in R} \left( r^2- \sum^{e_p}_{i=1}  e_p k^2_{p,i} \right)\\
     &= r^2(g_Y-1)+1 +\frac{r^2}{2\vert \Gamma \vert} \deg(R+R_{red})- \sum_{p \in R}  \sum^{e_p}_{i=1} \frac{e_p  k^2_{p,i}}{2\vert \Gamma \vert} \\
     &=  r^2(g_Y-1)+1 + \frac{1}{2}  \sum_{q \in B} \left( r^2- \sum^{e_p}_{i=1}  k^2_{q,i} \right).
     %\sum_{p\in R}\;\sum_{i+j\not\equiv 2\mod e_q}k_{p,i}k_{p,j},$$
          \end{align*}
               where $B$ is the branched divisor of the Galois covering $\pi: X \longrightarrow Y$, and for all $q \in B$ we set:  $e_q:=e_p$ and $k_{q,i}:=k_{p,i}$ for some $p \in \pi^{-1}(q)$.
 \end{theo}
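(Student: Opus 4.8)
The plan is to reduce everything to the computation of $\dim H^1(X,\operatorname{End}(E))^{\Gamma}$ at a stable point $E$, which by the previous theorem is exactly the tangent space to $\mathcal{U}_X^{\Gamma,\theta}(r,d)$. The first thing I would record is that a $\Gamma$-stable bundle is $\Gamma$-simple, so that $H^0(X,\operatorname{End}(E))^{\Gamma}=\operatorname{End}_\Gamma(E)=\mathbb{C}$; hence $\dim \mathcal{U}_X^{\Gamma,\theta}(r,d)=1-\chi^{\Gamma}(\operatorname{End}(E))$, where $\chi^{\Gamma}:=\dim H^0(\operatorname{End} E)^{\Gamma}-\dim H^1(\operatorname{End} E)^{\Gamma}$ is the $\Gamma$-invariant Euler characteristic. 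Here I use that $\operatorname{End}(E)$ carries a canonical $\Gamma$-linearization, namely conjugation by the $\psi_\gamma$, which is insensitive to the scalar ambiguity in the linearization of $E$ itself.

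Next I would compute $\chi^{\Gamma}$ by averaging. Since $\dim V^{\Gamma}=\frac{1}{|\Gamma|}\sum_{\gamma}\operatorname{tr}(\gamma\mid V)$ for any finite-dimensional representation $V$, applying this to the virtual representation $H^0(\operatorname{End} E)-H^1(\operatorname{End} E)$ gives $\chi^{\Gamma}(\operatorname{End} E)=\frac{1}{|\Gamma|}\sum_{\gamma\in\Gamma}L(\gamma)$, where $L(\gamma)=\sum_i(-1)^i\operatorname{tr}(\gamma\mid H^i(X,\operatorname{End} E))$ is the holomorphic Lefschetz number. The identity term is $L(e)=\chi(\operatorname{End} E)=r^2(1-g_X)$ by Riemann--Roch, since $\operatorname{End}(E)$ has rank $r^2$ and degree $0$. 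For $\gamma\neq e$ I would invoke the Atiyah--Bott holomorphic Lefschetz fixed point formula: the fixed locus of $\gamma$ is the set of ramification points $p$ with $\gamma\in\Gamma_p$, these are isolated, and at such a point $\operatorname{tr}(\gamma\mid\operatorname{End}(E)_p)=\operatorname{tr}(\psi_\gamma\mid E_p)\,\overline{\operatorname{tr}(\psi_\gamma\mid E_p)}=|\operatorname{tr}(\psi_\gamma\mid E_p)|^2$ (eigenvalues being roots of unity), while the denominator is $1-a_\gamma(p)$, with $a_\gamma(p)$ the eigenvalue of $d\gamma$ on $T_pX$.

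The computational heart, which I expect to be the main obstacle, is the evaluation of the resulting local sums. Reorganizing by fixed point gives $\sum_{\gamma\ne e}L(\gamma)=\sum_{p}S_p$ with $S_p=\sum_{\gamma\in\Gamma_p\setminus\{e\}}|\operatorname{tr}(\psi_\gamma\mid E_p)|^2/(1-a_\gamma(p))$. Writing the eigenvalues of $\psi_{\gamma_p}$ on $E_p$ as $\xi_p^{l}$ with multiplicity $k_{p,l}$ and expanding $|\operatorname{tr}|^2=\sum_{l,l'}k_{p,l}k_{p,l'}\,\xi_p^{(l-l')j}$, I would reduce $S_p$ to the classical root-of-unity sums $f(k):=\sum_{j=1}^{e-1}\xi^{kj}/(1-\xi^{j})$, which evaluate to $\frac{e-1}{2}$ for $k\equiv 0$ and to $k-\frac{e+1}{2}$ for $1\le k\le e-1$. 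A re-indexing by the (coprime) exponent of $a_\gamma(p)$ shows the answer is independent of the tangent eigenvalue, since that exponent merely permutes the $j$'s; and pairing $(l,l')$ with $(l',l)$ together with $f(k)+f(e-k)=-1$ collapses the off-diagonal terms. This yields the clean formula $S_p=-\tfrac12\big(r^2-e_p\sum_i k_{p,i}^2\big)$, depending only on $r$, $e_p$ and $\sum_i k_{p,i}^2$.

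Finally I would assemble the pieces. Combining $L(e)$ with $\sum_p S_p$ and $|\Gamma|=n$ gives $\dim \mathcal{U}_X^{\Gamma,\theta}(r,d)=1+\frac{r^2}{n}(g_X-1)+\frac{1}{2n}\sum_{p\in R}\big(r^2-e_p\sum_i k_{p,i}^2\big)$, which is the first displayed formula. The second follows from the Hurwitz relation $2(g_X-1)=2n(g_Y-1)+m$ with $m=\deg R=\sum_p(e_p-1)$, splitting $\frac{1}{2n}\sum_p r^2=\frac{r^2}{2n}\deg R_{red}$ and combining $\frac{r^2 m}{2n}+\frac{r^2}{2n}\deg R_{red}=\frac{r^2}{2n}\deg(R+R_{red})$. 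The third comes from pushing the sum down to the branch divisor $B$ on $Y$: each branch point $q$ has exactly $n/e_q$ preimages with conjugate isotropy, hence identical $e_q$ and $\sum_i k_{q,i}^2$, and regrouping $\frac{r^2 m}{2n}+\frac{1}{2}\sum_q r^2/e_q=\frac{r^2}{2}|B|$ converts the expression into $r^2(g_Y-1)+1+\frac12\sum_{q\in B}\big(r^2-\sum_i k_{q,i}^2\big)$.
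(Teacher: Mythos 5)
Your proposal is correct, and it coincides with the paper's argument up to the point where the local contributions at the ramification points must be evaluated: both reduce $\dim \mathcal{U}_X^{\Gamma,\theta}(r,d)$ to $1-\frac{1}{|\Gamma|}\sum_{\gamma}L(\gamma)$ at a stable (hence $\Gamma$-simple) point, compute $L(e)$ by Riemann--Roch, and invoke the Atiyah--Bott holomorphic Lefschetz formula for $\gamma\neq e$. Where you genuinely diverge is in the evaluation of the local sums $S_p=\sum_{\gamma\in\Gamma_p\setminus\{e\}}\operatorname{tr}\bigl(\gamma\mid \operatorname{End}(E)_p\bigr)/(1-d_p\gamma)$: you do this directly, via the classical character sums $f(k)=\sum_{j=1}^{e-1}\xi^{kj}/(1-\xi^{j})$, using $f(0)=\tfrac{e-1}{2}$, the identity $f(k)+f(e-k)=-1$, and the symmetry $(l,l')\leftrightarrow(l',l)$ to collapse the off-diagonal terms, arriving at $S_p=-\tfrac{1}{2}\bigl(r^2-e_p\sum_i k_{p,i}^2\bigr)$ (which I have checked, including your observation that the coprime tangent exponent only permutes the nonzero residues and so does not affect the result, and your correct identification of the numerator as $|\operatorname{tr}(\psi_{\gamma}\mid E_p)|^2$ --- a point the paper leaves implicit). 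The paper instead avoids these root-of-unity sums by a bootstrapping device: it applies the averaging formula and Lefschetz a second time to the cyclic subgroup $\Gamma_p$, converting $\sum_{\gamma\in\Gamma_p,\gamma\neq e}L(\gamma,\psi)$ into $e_p\dim H^1(X,\operatorname{End}(E))^{\Gamma_p}$, and computes the latter by descending $\operatorname{End}(E)$ to $X/\Gamma_p$ through a Hecke modification $\mathcal{F}$ whose degree $-\tfrac{1}{2}\bigl(r^2-\sum_i k_{p,i}^2\bigr)$ carries the dependence on the type. Your route is more elementary and self-contained (no descent, no Hecke modification, and no assumption about the fixed locus of $\Gamma_p$ beyond the point $p$ itself), while the paper's route has the merit of exhibiting $\tfrac{1}{2}\bigl(r^2-\sum_i k_{p,i}^2\bigr)$ as the degree of the Hecke-modified sheaf, which ties in with the parabolic picture of the appendix. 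The final bookkeeping --- Hurwitz, the splitting of $\deg(R+R_{\rm red})$, and the regrouping over $B$ using that conjugate stabilizers yield identical local data --- is the same in both arguments and is carried out correctly in your proposal.
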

 To show this theorem we need to recall two results.
 \begin{theo}[\cite{serre1971representations}] Let $\Gamma$ be a finite group acting on a finite dimensional vector space $V$ equipped with a $\Gamma$-module structure given by group morphism $$ \rho :\Gamma \rightarrow \mathrm{End}(V).$$  
Let $V^{\Gamma}$ be the invariant subspace then we have  :  $$\dim V^{\Gamma}= \frac{1}{\vert \Gamma \vert} \sum_{\gamma \in \Gamma} Tr(\rho(\gamma)).$$
 \end{theo}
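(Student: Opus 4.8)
The plan is to realize the averaging operator as a projection onto $V^{\Gamma}$ and to compute its trace in two different ways. Set
$$P := \frac{1}{\vert \Gamma \vert}\sum_{\gamma \in \Gamma}\rho(\gamma) \in \mathrm{End}(V).$$
First I would check that $P$ maps $V$ into the invariant subspace and restricts to the identity there. For the first point, for any $\eta \in \Gamma$ one has $\rho(\eta)P = \frac{1}{\vert \Gamma \vert}\sum_{\gamma}\rho(\eta\gamma) = P$, since the map $\gamma \mapsto \eta\gamma$ merely permutes $\Gamma$; hence $\rho(\eta)$ fixes every vector in the image of $P$, so that $\mathrm{Im}(P)\subseteq V^{\Gamma}$. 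For the second point, if $v\in V^{\Gamma}$ then $\rho(\gamma)v=v$ for all $\gamma$, whence $Pv = \frac{1}{\vert \Gamma \vert}\sum_{\gamma} v = v$.

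These two facts together give $P^{2} = P$ (applying $P$ lands one in $V^{\Gamma}$, on which $P$ acts as the identity), so $P$ is an idempotent endomorphism of the complex vector space $V$ with image exactly $V^{\Gamma}$. Being idempotent, $P$ is diagonalizable with all eigenvalues in $\{0,1\}$: the $1$-eigenspace is $\mathrm{Im}(P)=V^{\Gamma}$ and the $0$-eigenspace is $\ker P$. Consequently its trace equals the multiplicity of the eigenvalue $1$, that is $\mathrm{Tr}(P)=\dim V^{\Gamma}$.

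It then remains to compute $\mathrm{Tr}(P)$ directly. By linearity of the trace,
$$\mathrm{Tr}(P) = \frac{1}{\vert \Gamma \vert}\sum_{\gamma\in\Gamma}\mathrm{Tr}(\rho(\gamma)),$$
and comparing the two expressions for $\mathrm{Tr}(P)$ yields the claimed formula. The argument is entirely elementary; the only points that genuinely require attention are the verification that $P$ is a true projection onto $V^{\Gamma}$ (namely $\rho(\eta)P=P$ for all $\eta$, together with $P\vert_{V^{\Gamma}}=\mathrm{id}$), and the observation that dividing by $\vert\Gamma\vert$ is legitimate precisely because the ground field is $\mathbb{C}$, so that the idempotent $P$ is diagonalizable and its trace reads off the rank.
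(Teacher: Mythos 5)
Your proof is correct and complete: the averaging operator $P=\frac{1}{\vert\Gamma\vert}\sum_{\gamma\in\Gamma}\rho(\gamma)$ is indeed an idempotent with image exactly $V^{\Gamma}$, and comparing $\mathrm{Tr}(P)=\dim V^{\Gamma}$ with the linearity of the trace gives the formula, with the division by $\vert\Gamma\vert$ justified since the ground field is $\mathbb{C}$. The paper offers no proof of this statement --- it simply cites Serre's book --- and your argument is precisely the standard averaging-projection proof given there, so there is nothing to reconcile.
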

 Let $\gamma :X \longrightarrow X$ be an automorphism of the curve $X$ and $(E,\psi)$ a $\Gamma-$invariant vector bundle. Hence,  the map $\gamma$ induces endomorphisms $$\rho_i(\gamma): H^{i}(X,E) \longrightarrow H^{i}(X,E),$$ for $i=0,1$.  Define the Lefschetz index of $(\gamma,\psi)$ to be:
 $$L(\gamma,\psi):=Tr(\rho_0(\gamma))-Tr(\rho_1(\gamma)).$$ 
We recall Lefschetz fixed point formula.
 \begin{theo}[\cite{AB}, Theorem 4.12]
With the above notations, we have $$ L(\gamma,\psi)=\sum_{p \in {\rm Fix}(\gamma)} \frac{Tr(\psi_{\gamma,p})}{\det(1-d_p\gamma)}.$$
 \end{theo}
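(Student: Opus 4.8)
The plan is to identify $L(\gamma,\psi)$ as the Lefschetz number of a geometric endomorphism of the Dolbeault elliptic complex and to establish the formula by the heat-kernel (McKean--Singer) method, carrying out the Atiyah--Bott localization explicitly on the curve. Since $\gamma$ has finite order (it lies in the finite group $\Gamma$), I would first average an arbitrary Hermitian metric over the cyclic group $\langle\gamma\rangle$ to obtain a $\gamma$-invariant Hermitian structure on $X$ and on $E$. The pair $(\gamma,\psi)$ then induces a unitary operator $T=\psi_\gamma\circ\gamma^{*}$ on each Dolbeault space $\Omega^{0,q}(X,E)$, $q=0,1$, which commutes with $\bar\partial$ and hence with the Dolbeault Laplacian $\Delta_q=\bar\partial^{*}\bar\partial+\bar\partial\bar\partial^{*}$. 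By Hodge theory the harmonic spaces $\ker\Delta_q$ are $T$-equivariantly identified with $H^q(X,E)$, so that $L(\gamma,\psi)=\mathrm{Tr}\big(T|_{\ker\Delta_0}\big)-\mathrm{Tr}\big(T|_{\ker\Delta_1}\big)$.

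The first key step is the McKean--Singer cancellation: for every $t>0$,
\[
L(\gamma,\psi)=\mathrm{Tr}\!\left(Te^{-t\Delta_0}\right)-\mathrm{Tr}\!\left(Te^{-t\Delta_1}\right),
\]
and this expression is independent of $t$. Indeed $T$ preserves each finite-dimensional $\lambda$-eigenspace of $\Delta_q$, and for $\lambda>0$ the operator $\bar\partial$ is a $T$-equivariant isomorphism from the $\lambda$-eigenspace in degree $0$ onto the $\lambda$-eigenspace in degree $1$; the two contributions $e^{-t\lambda}\mathrm{Tr}(T|_{E^{0}_{\lambda}})$ and $e^{-t\lambda}\mathrm{Tr}(T|_{E^{1}_{\lambda}})$ therefore cancel in the alternating sum, leaving only the harmonic ($\lambda=0$) part. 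Writing each smoothing operator through its Schwartz kernel $k^{q}_{t}(\cdot,\cdot)$ and composing with $T$, I obtain
\[
\mathrm{Tr}\!\left(Te^{-t\Delta_q}\right)=\int_X \mathrm{tr}\big(\psi_\gamma(x)\,k^{q}_{t}(\gamma x,x)\big)\,dV(x).
\]

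The second step is the $t\to 0^{+}$ localization. The heat kernel obeys a Gaussian off-diagonal bound $|k^{q}_{t}(y,x)|\lesssim t^{-1}e^{-d(y,x)^{2}/Ct}$, so the integrand above is exponentially small away from the fixed-point set, and in the limit only arbitrarily small disks around the fixed points (isolated, by the nondegeneracy hypothesis $\det(1-d_p\gamma)\neq0$) contribute. Near such a $p$ I would apply Cartan's linearization lemma for finite-order biholomorphisms to choose a holomorphic coordinate $z$ with $\gamma(z)=\alpha_p z$, where $\alpha_p=d_p\gamma$, together with a local holomorphic frame of $E$ in which $\psi_\gamma$ acts by the constant matrix $\psi_{\gamma,p}$. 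The local contribution then reduces to the equivariant small-time heat trace of the flat model $\bar\partial$-operator on $\mathbb{C}$ twisted by $\psi_{\gamma,p}$, a Gaussian integral which evaluates to
\[
\nu_p=\frac{\mathrm{tr}(\psi_{\gamma,p})}{1-\alpha_p}=\frac{\mathrm{Tr}(\psi_{\gamma,p})}{\det_{\mathbb{C}}(1-d_p\gamma)},
\]
and summing $\nu_p$ over $p\in{\rm Fix}(\gamma)$ yields the asserted formula.

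I expect the main obstacle to be exactly this final local computation: controlling the nonlinear remainders of $\gamma$ and of the metric near each fixed point, justifying rigorously that only the linearized model survives as $t\to0$, and evaluating the resulting Gaussian supertrace so as to produce the precise factor $1/\det_{\mathbb{C}}(1-d_p\gamma)$ with the correct normalization (this is where the holomorphic structure enters, distinguishing the complex determinant from its topological real analogue). As an analysis-free cross-check I would also set $Z=X/\langle\gamma\rangle$, decompose the pushforward of $E$ along $q:X\to Z$ into character eigenbundles for $\mathbb{Z}/e_p\cong\langle\gamma\rangle$, and compute $L(\gamma,\psi)=\sum_{\chi}\chi(\gamma)\,\chi\big(Z,(q_{*}E)_{\chi}\big)$ by Riemann--Roch on $Z$; here the fixed-point terms reappear through the degrees of the eigenbundles, which are governed by the local types $\theta_p$ at the ramification points, tying the formula directly to the representation-theoretic computations used elsewhere in the paper.
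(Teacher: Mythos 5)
Your proposal is essentially correct, but note that the paper does not prove this statement at all: it is imported as a black box from Atiyah--Bott (\cite{AB}, Theorem 4.12), so any proof you give is by construction a different route. What you sketch is the standard heat-equation (McKean--Singer/Kotake) proof of the holomorphic Lefschetz formula, and the skeleton is sound: averaging over the finite cyclic group $\langle\gamma\rangle$ legitimately produces invariant metrics (the cocycle condition $\psi_{\gamma^k}=\psi_\gamma\circ\gamma^*\psi_\gamma\circ\cdots$ makes $\langle\gamma\rangle$ act honestly on $E$, so $T=\psi_\gamma\circ\gamma^*$ commutes with the Laplacians); the supertrace cancellation on nonzero eigenspaces is correct; fixed points are automatically isolated and nondegenerate here, since $\gamma\neq\mathrm{id}$ has finite order, so Cartan linearization gives $d_p\gamma=\alpha_p$ a root of unity $\neq 1$, and on a curve $\det_{\mathbb{C}}(1-d_p\gamma)=1-\alpha_p$, matching the paper's denominator. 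You correctly flag the one place where care is needed: each degree separately contributes a Gaussian integral with the \emph{real} factor $1/|1-\alpha_p|^2$, and only the alternating sum over $q=0,1$ (degree $1$ carrying the extra twist $\bar{\alpha}_p$ from $\gamma^*$ on $\Lambda^{0,1}$) collapses to $(1-\bar{\alpha}_p)/|1-\alpha_p|^2=1/(1-\alpha_p)$; this, together with the comparison of the true heat kernel to the linearized flat model as $t\to 0^+$, is standard but nontrivial analysis (as in Atiyah--Bott's own argument, which localizes a smoothing parametrix rather than the heat semigroup). Your ``analysis-free cross-check'' deserves more credit than you give it: on curves, decomposing $q_*E$ into character eigenbundles on $X/\langle\gamma\rangle$ and applying Riemann--Roch is essentially the Chevalley--Weil/Eichler trace formula, and it can be upgraded to a complete algebraic proof of the statement in the generality the paper actually uses it --- arguably a better fit for this paper's methods than the heat-kernel machinery, since the degrees of the eigenbundles are computed by exactly the Hecke-modification bookkeeping the paper performs in the proof of Theorem \ref{dim}.
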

 \begin{proof}[Proof of Theorem \ref{dim}] Let $E$ be a stable vector bundle,  hence it is $\Gamma-$stable and $$H^0(X,{\rm End}(E))\cong \bb C.$$ Now a $\Gamma-$stable vector bundle is a smooth point in the moduli space $\al U_X^{\Gamma,\theta}(r,d)$,  so we have the equality: 
\begin{align*}
  \dim  \left( \al U_X^{\Gamma,\theta}(r,d) \right) &= \dim T_E\al U_X^{\Gamma,\theta}(r,d)  = \dim H^1(X,{\rm End}(E))^{\Gamma}.
\end{align*}
Let denote by  $V_i=H^i(X,{\rm End}(E))$ for $i=0,1$.   By   the previous theorems we get:
\begin{align*}
  \dim H^1(X,{\rm End}(E))^{\Gamma} &= \frac{1}{\vert \Gamma \vert} \sum_{\gamma \in \Gamma} Tr(\rho_1(\gamma))\\
  &= \frac{1}{\vert \Gamma \vert}  \dim H^1(X,{\rm End}(E))+ \frac{1}{\vert \Gamma \vert} \sum_{\gamma \in \Gamma-\{id\}} (Tr(\rho_0(\gamma))-L(\gamma,\psi)).
  \end{align*}
we use the fact that $\rho_1(\id)=\id:H^1(X,{\rm End}(E)) \rightarrow  H^1(X,{\rm End}(E))$ hence the trace is the dimension, and  as the vector bundle is stable we get for all $\gamma \in \Gamma$ that $\rho_0(\gamma)=1$, this is do to the fact that the representation $\rho_0$ is given by the the natural representation $\psi_{\gamma}\otimes \  ^t\psi_{\gamma}^{-1} $ on ${\rm End}(E)$ which is trivial globally. Hence by Riemann-Roch formula we get : 
\begin{align*}
  \dim H^1(X,{\rm End}(E))^{\Gamma} &= \frac{r^2(g_X-1)+1}{\vert \Gamma \vert}+ \frac{1}{\vert \Gamma \vert} \sum_{\gamma \in \Gamma-\{id\}} (1-L(\gamma,\psi)) \\
  &=  \frac{r^2(g_X-1)}{\vert \Gamma \vert}+1-\frac{1}{\vert \Gamma \vert} \sum_{\gamma \in \Gamma-\{id\}} \sum_{p \in {\rm Fix}(\gamma)} \frac{Tr(\psi_{\gamma,p})}{1-d_p\gamma}
    \end{align*}
    By Hurwitz formula we get the following equality: 
    \begin{align*}
    \dim H^1(X,{\rm End}(E))^{\Gamma} &= r^2(g_Y-1)+1+ \frac{r^2}{2 \vert \Gamma \vert} \deg(R)- \frac{1}{\vert \Gamma \vert} \sum_{p \in R} \sum_{\gamma \in \Gamma_p, \gamma\neq id} \frac{Tr(\psi_{\gamma,p})}{1-d_p\gamma} \\
    &= r^2(g_Y-1)+1+ \frac{r^2}{2 \vert \Gamma \vert} \deg(R)- \frac{1}{\vert \Gamma \vert} \sum_{p \in R} \sum_{\gamma \in \Gamma_p, \gamma \neq id} \frac{Tr(\psi_{\gamma,p})}{1-d_p\gamma}  \\
    \dim H^1(X,{\rm End}(E))^{\Gamma} &= r^2(g_Y-1)+1+ \frac{r^2}{2} \deg(B_{red}) - \frac{1}{\vert \Gamma \vert} \left( \frac{r^2}{2} \deg(R_{red})+ \sum_{p \in R} \sum_{\gamma \in \Gamma_p, \gamma \neq id}  L(\gamma,\psi) \right) \\
    &=r^2(g_Y-1)+1+ \frac{r^2}{2} \deg(B_{red}) - \frac{1}{\vert \Gamma \vert} \sum_{p \in R}  \left( \frac{r^2}{2}+  \sum_{\gamma \in \Gamma_p, \gamma\neq id}  L(\gamma,\psi) \right)
     \end{align*}
     Now to complete the calculation we apply for all $p \in R$ Lefschetz fix point Theorem for the representations of  the group $\Gamma_p$ given by
    \begin{align*}
     \widehat{\rho}_0&: \Gamma_p \hookrightarrow \Gamma \longrightarrow{\rm End} \left( H^0(X,{\rm End}(E)) \right) \simeq \bb C \\ 
     \widehat{\rho}_1&: \Gamma_p \hookrightarrow \Gamma \longrightarrow{\rm End} \left(  H^1(X,{\rm End}(E)) \right).
       \end{align*}
 For all $p \in  R$ we have:
   \begin{align*} 
  \frac{r^2}{2}+  \sum_{\gamma \in \Gamma_p, \gamma\neq id}  L(\gamma,\psi) &= \frac{r^2}{2}+\sum_{\gamma \in \Gamma_p, \gamma\neq id}  \left( Tr(\widehat{\rho}_0(\gamma))-Tr(\widehat{\rho}_1(\gamma)) \right) \\
  &=  \frac{r^2}{2}+ e_p-1+Tr(\widehat{\rho}_1(id))-\sum_{\gamma \in \Gamma_p}   Tr(\widehat{\rho}_1(\gamma)),
   \end{align*} 
   we used the fact that $\forall \gamma \in \Gamma_p$ we have  $\widehat{\rho}_0(\gamma)=1$ hence $ Tr(\widehat{\rho}_0(\gamma))=1$ and $Tr(\widehat{\rho}_1(id))=\dim  H^1(X,{\rm End}(E))$.  Now by Lefschetz Fix point Theorem for the group $\Gamma_p$,  we get 
     \begin{align*} 
      \frac{r^2}{2}+  \sum_{\gamma \in \Gamma_p, \gamma\neq id}  L(\gamma,\psi) &= \frac{r^2}{2}+ e_p-1+\dim  H^1(X,{\rm End}(E))-e_p \dim  H^1(X,{\rm End}(E))^{\Gamma_p} \\
      &=\frac{r^2}{2}+ e_p+r^2(g_X-1)-e_p \dim  H^1(X,{\rm End}(E))^{\Gamma_p}
     \end{align*} 
   Let calculate $\dim  H^1(X,{\rm End}(E))^{\Gamma_p}$.  Set $\pi_p:X \rightarrow X_p:=X/\Gamma_p$, we notice that the ramification is total in $p$,  then
     \begin{align*} 
     \dim  H^1(X,{\rm End}(E))^{\Gamma_p}&=   \dim  H^1(X_p, \mathcal{F})=r^2(g_{X_p}-1)+1+\deg(\mathcal{F})
         \end{align*} 
         where $ {\pi_p}^*\left( \mathcal{F} \right)$ is the Hecke modification of $End(E)$ for which the $\Gamma_p$-linearisation is trivial, one can show that $$\deg(\mathcal{F})=\frac{-1}{2} \left( r^2- \sum_{i=1}^{e_p} k^2_{p,i} \right)$$
  By applying Hurwitz formula for the covering $\pi_p:X \rightarrow X_p:=X/\Gamma_p$ we get:
      \begin{align*} 
        \dim  H^1(X,{\rm End}(E))^{\Gamma_p}&= \frac{1}{e_p} \left( r^2(g_X-1)-\frac{(e_p-1)r^2}{2} \right)+1-\deg(\mathcal{F}).
       \end{align*} 
       Hence 
        \begin{align*} 
      \frac{r^2}{2}+  \sum_{\gamma \in \Gamma_p, \gamma\neq id}  L(\gamma,\psi) &= \frac{r^2}{2}+ e_p+r^2(g_X-1)-e_p \dim  H^1(X,{\rm End}(E))^{\Gamma_p} \\
      &= \frac{r^2}{2}+\frac{(e_p-1)r^2}{2}+e_p\deg(\mathcal{F}) \\
      &=\frac{r^2}{2}+\frac{(e_p-1)r^2}{2}-\frac{e_pr^2}{2}+\frac{e_p}{2}  \sum_{i=1}^{e_p} k^2_{p,i} = \frac{e_p}{2}  \sum_{i=1}^{e_p} k^2_{p,i}
     \end{align*} 
     By substitution we get the result:
$$        \dim  H^1(X,{\rm End}(E))^{\Gamma}
=r^2(g_Y-1)+1 +\frac{1}{2} \sum_{q \in B} \left( r^2- \sum^{e_p}_{i=1}  k^2_{q,i} \right).
$$     
  \end{proof}
  %%%%
\section{$\Gamma-$invariant Hitchin system}\label{base}

% We want to describe the cotangent space \( T^*_E\al U_X^{\Gamma,\theta}(r,d) \) as a \(\Gamma\)-invariant section of an explicit vector bundle. 

% We recall that \(\Gamma\)-invariant Higgs bundles are given as follows:
% \[
% T^{*}_{(E,\psi)}\al U_X^\Gamma(r,d)= \left( H^1( X,{\rm End}(E))^{\Gamma} \right)^{*} \simeq H^0\left(X,{\rm End}(E)K_X \right)^{\Gamma},
% \]
% where the linearization on the right-hand side is given for all \( \gamma \in \Gamma \) by:
% \[
% \psi_{\gamma} \otimes \psi^{-1}_{\gamma} \otimes d\gamma: \gamma^*({\rm End}(E)K_X) \rightarrow{{\rm End}}(E)K_X.
% \]

% \medskip
The Hitchin map is a morphism defined on the cotangent space $T^*\al U_X(r,d)$ of the moduli space of vector bundles, and with values in the vector space $\bigoplus_{i=1}^r H^0(X,K_X^i)$. It associates to a pair $(E,\phi)$, where $\phi:E\ra E\otimes K_X$ is a Higgs field on $E$, the coefficients of its characteristic polynomial.  More precisely, let \(\al H\) be the Hitchin morphism :
\[
\al H: T^*\al U_X(r,d) \longrightarrow \al W:= \bigoplus_{i=1}^r H^0(X,K^i_X),
\]
it is given by
\[
\al H (E, \varphi):= \bigoplus_{i=1}^r \mathrm{Tr}( \wedge^{i}  \varphi )  \in \al W.
\]

In this section, we study this morphism in the $\Gamma-$invariant case, and in particular, we  determine the \(\Gamma\)-invariant Hitchin base.

\begin{lemm}
    The Hitchin map \(\al H\) is equivariant with respect to the canonical actions of \(\Gamma\).
\end{lemm}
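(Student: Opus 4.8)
The plan is to make both $\Gamma$-actions explicit and then observe that $\al H$ intertwines them, the whole point being that both actions are built from one and the same canonical linearization on $K_X$. Write $\kappa_\gamma:\gamma^*K_X\stackrel{\sim}{\lra}K_X$ for the canonical linearization of $\gamma\in\Gamma$. The automorphism $\gamma:X\ra X$ sends a Higgs pair $(E,\varphi)$ to $(\gamma^*E,\widetilde\varphi_\gamma)$, where $\widetilde\varphi_\gamma$ is the composite of $\gamma^*\varphi:\gamma^*E\ra\gamma^*E\otimes\gamma^*K_X$ with $\id\otimes\kappa_\gamma$. On the base, $\Gamma$ acts on each summand $H^0(X,K_X^i)$ of $\al W$ by $s\mapsto\kappa_\gamma^{\otimes i}\circ\gamma^*s$, that is, pullback followed by the $i$-th tensor power of the canonical linearization. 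Equivariance is then the assertion $\al H(\gamma^*E,\widetilde\varphi_\gamma)=\gamma\cdot\al H(E,\varphi)$.

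First I would invoke the naturality of the characteristic-coefficient maps. For each $i$ the assignment $\varphi\mapsto\mathrm{Tr}(\wedge^i\varphi)$ is defined purely by functorial operations, namely exterior power and fiberwise trace, both of which commute with pullback along the isomorphism $\gamma$. Concretely, $\wedge^i(\gamma^*\varphi)=\gamma^*(\wedge^i\varphi)$ as maps $\gamma^*(\wedge^iE)\ra\gamma^*(\wedge^iE)\otimes\gamma^*K_X^i$, and the trace of a pulled-back endomorphism-valued section is the pullback of its trace. Hence $\mathrm{Tr}(\wedge^i(\gamma^*\varphi))=\gamma^*\mathrm{Tr}(\wedge^i\varphi)$ as sections of $\gamma^*K_X^i$, with no computation required beyond these formal identities.

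The one compatibility that makes everything fit is that the canonical linearization on $K_X^i$ is, by construction, the $i$-th tensor power $\kappa_\gamma^{\otimes i}$ of the canonical linearization on $K_X$, which is exactly how $\Gamma$ acts on the summand $H^0(X,K_X^i)$ of $\al W$. Threading the factor $\kappa_\gamma$ through $\wedge^i$ and the trace turns the twisted field $\widetilde\varphi_\gamma$ on the source side into $\kappa_\gamma^{\otimes i}\circ\gamma^*\mathrm{Tr}(\wedge^i\varphi)$ on the target side, which is precisely the action of $\gamma$ on $\al W$. Summing over $i=1,\dots,r$ yields $\al H(\gamma^*E,\widetilde\varphi_\gamma)=\gamma\cdot\al H(E,\varphi)$, the desired equivariance.

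I expect the only delicate point to be the bookkeeping: verifying that the single identification $\kappa_\gamma$ threads consistently through the exterior power, the trace, and the action on $\al W$, so that no spurious twist (by a root of unity over the ramification points) is introduced and the canonical linearization used on the source and on the base genuinely agree. The functoriality of $\wedge^i$ and of $\mathrm{Tr}$ with respect to $\gamma^*$ is formal, so once the linearizations are matched the equality is immediate.
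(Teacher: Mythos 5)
Your proposal is correct and rests on the same two facts as the paper's proof: the characteristic coefficients $\mathrm{Tr}(\wedge^i\varphi)$ are unchanged by conjugation by $\psi_\gamma$, and the twist coming from $K_X$ contributes exactly the $i$-th power of the canonical linearization, matching the $\Gamma$-action on $H^0(X,K_X^i)$. The only difference is presentational: you phrase this globally via functoriality of $\wedge^i$ and $\mathrm{Tr}$ under pullback, whereas the paper carries out the equivalent computation in a local coordinate $t$ at a ramification point, writing $\phi(t)=\xi\,\psi_\gamma\,\phi(\xi t)\,\psi_\gamma^{-1}$ and concluding $\mathrm{Tr}(\wedge^i\varphi)(t)=\xi^i\,\mathrm{Tr}(\wedge^i\varphi)(\xi t)$.
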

\begin{proof} Let $\phi$ a $\Gamma-$ invariant Higgs bundles of type $\theta$, hence satisfies around $p \in R$ the equation: 
$$ \phi(t)= \xi \psi_{\gamma} \phi(\xi t) \psi^{-1}_{\gamma},$$
it follows that the endomorphisms $\phi(t)$ and $\xi\phi(\xi t)$ are similar, hence  they have the same characteristic polynomial, and we  get: $$\mathrm{Tr}(\wedge^{i} \varphi(t))=\mathrm{Tr}(\wedge^{i} \left( \xi \varphi(t) \right))=\xi^{i} \mathrm{Tr}(\wedge^{i}  \varphi(t)).$$  
 This concludes the proof as the linearization of the bundle $K_X^{i}$ is given by $\xi^{i}$.   \smallskip
\end{proof}
By this lemma, the restriction of \(\al H\) to the invariant part has values in the \(\Gamma\)-invariant space:
\[
\al H: T^*\al U_X^{\Gamma,\theta}(r,d) \longrightarrow \al W^\Gamma= \bigoplus_{i=1}^r H^0(X,K^i_X)^{\Gamma}.
\]
% The image of a semistable \(\Gamma\)-invariant Higgs bundle \((E,\psi)\) is in the invariant subspace, i.e.,
% \[
% \al H (E, \varphi):= \bigoplus_{i=1}^r \mathrm{Tr}( \wedge^{i}  \varphi )  \in \al W^{\Gamma}.
% \]
% More over, we have 
We show the following result.
\newpage
\begin{theo}\label{hitchin}
    For any type \(\theta\), there exists a subspace \(\al W^\theta \subset \al W^\Gamma\) such that:
    \begin{enumerate}
        \item [$(i)$] The Hitchin system induces a map: \[\al H_{\theta}:T^*\al U_X^{\Gamma,\theta}(r,d)\ra \al W^\theta.\]
        \item [$(ii)$] \; \(\dim \al W^{\theta}=\dim \al U_X^{\Gamma,\theta}(r,d)\).
    \end{enumerate}
\end{theo}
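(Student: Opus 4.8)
The plan is to construct $\al W^\theta$ explicitly as the subspace of $\al W^\Gamma = \bigoplus_{i=1}^r H^0(X, K_X^i)^\Gamma$ cut out by prescribed vanishing (or pole) conditions on the coefficients of the characteristic polynomial at each ramification point $p \in R$, dictated by the local type $\theta_p$. The equivariance lemma already tells us that a $\Gamma$-invariant Higgs field $\phi$ of type $\theta$ produces characteristic coefficients $a_i = \mathrm{Tr}(\wedge^i \phi) \in H^0(X, K_X^i)^\Gamma$. For part $(i)$, the key local computation is to determine, from the local equation $\phi(t) = \xi_p \psi_{\gamma_p} \phi(\xi_p t)\psi_{\gamma_p}^{-1}$ together with the eigenvalue data $\theta_p = (k_{p,1}, \dots, k_{p,e_p})$, the minimal order of vanishing that each $a_i$ is forced to have at $p$. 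Concretely, I would diagonalize $\psi_{\gamma_p}$, write $\phi$ in block form with respect to the eigenspace decomposition of $E_p$, and read off from the covariance relation the valuation $v_{p,i}$ that $\mathrm{Tr}(\wedge^i\phi)$ must satisfy. This defines $\al W^\theta \subset \al W^\Gamma$ as those $(a_i)$ with $v_p(a_i) \geqslant v_{p,i}$ for all $p$ and all $i$, and shows $\al H_\theta$ lands in $\al W^\theta$.

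For part $(ii)$, the dimension count, I would compute $\dim \al W^\theta = \sum_{i=1}^r \dim H^0(X, K_X^i(-D_i))^\Gamma$, where $D_i = \sum_{p} v_{p,i}\, p$ is the divisor encoding the local vanishing orders. Each summand is a $\Gamma$-invariant section space on $X$, which by the same pushforward/invariant-direct-image technique used in the Serre duality lemma descends to an ordinary $H^0$ on $Y$ of a suitable line bundle twisted by parabolic-type data at the branch points $B$. Riemann–Roch on $Y$ (the relevant line bundles having large enough degree that $H^1$ vanishes, so that $\dim H^0 = \deg + 1 - g_Y$ up to the $i=1$ edge case) then gives each term in closed form. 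Summing over $i$ and comparing with the three-line formula of Theorem~\ref{dim} is the crux: I expect the totals to match because both are ultimately governed by the same local invariants $\sum_i k_{p,i}^2$ entering through the count of eigenvalue multiplicities.

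**The main obstacle** is the local valuation computation and its bookkeeping. The covariance $\phi(t) = \xi_p \psi_{\gamma_p}\phi(\xi_p t)\psi_{\gamma_p}^{-1}$ constrains the $(a,b)$ block of $\phi$ (between the $\xi_p^a$- and $\xi_p^b$-eigenspaces) to have a leading $t$-exponent determined by $a - b + 1 \bmod e_p$; translating these block-wise constraints into the precise forced valuation $v_{p,i}$ of the $i$-th elementary symmetric function of eigenvalues—and doing so in a way that respects the Young-diagram combinatorics of the type $\theta$ flagged after the definition of type—is where all the subtlety lies. I would organize this by associating to each $i$ the minimal total "shift deficit" over all ways of selecting an $i$-element sub-multiset of eigenvalue-blocks, which is exactly the kind of quantity the attached Young diagrams are designed to encode. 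Once $v_{p,i}$ is pinned down correctly, verifying $\dim\al W^\theta = \dim\al U_X^{\Gamma,\theta}(r,d)$ reduces to an algebraic identity between two explicit expressions in $g_Y$, $r$, and the $k_{p,i}$, which I expect to fall out after substituting the Hurwitz relation; I would present that final comparison as a direct calculation rather than belabor it.
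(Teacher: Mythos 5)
Your proposal follows essentially the same route as the paper: a local block-diagonalization of $\phi$ with respect to the eigenspaces of $\psi_{\gamma_p}$ yields forced valuations of $\mathrm{Tr}(\wedge^i\phi)$ at each $p\in R$ (the paper encodes these via the Young-diagram function $\delta_p(i)$, giving $\nu\geqslant e_p\delta_p(i)-i$), the resulting twisted pluricanonical bundles descend to $Y$, and Riemann--Roch on $Y$ plus the identity $\sum_i(i-\delta_q(i))=\tfrac12(r^2-\sum_l k_{q,l}^2)$ matches the dimension formula of Theorem~\ref{dim}. The only parts you leave as expectations rather than verifications---pinning down $v_{p,i}$ exactly and the final algebraic identity---are precisely the computations the paper carries out, so the plan is sound and coincides with the published argument.
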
  
\bigskip

To determine the subspace \( \al W^{\theta} \), we need to calculate the valuation of each part \( \mathrm{Tr}( \wedge^{i}  \varphi ) \) of the Hicthin map.  \smallskip

For \( p \in R \), the vector space \( E_p \) can be written as a direct sum of eigenspaces:
\[
E_p= \bigoplus^{\sigma}_{j=1} V_j.
\]
Let \( \mathcal{O}_p \) be the local ring around \( p \), and take a local coordinate \( t \) around \( p \), so that we have the identification 
\[
\mathcal{O}_p= \mathbb{C} \left[ \left[ t  \right] \right].
\] 
Let \( \phi \in H^0({\rm End}(E)K_X) \) be a \(\Gamma\)-invariant Higgs field, i.e., for all \( \gamma \in \Gamma \), the endomorphism satisfies the equation:
\[
\phi \circ \psi_{\gamma}=\psi_{\gamma} d\gamma\circ \gamma^*(\phi).
\]
In particular, around the point \( p \), if we take \( dt \) as a trivialization of \( K_{X,p} \), we get:
\begin{equation}\label{phi}
    \phi(t) = \xi ( \psi_{\gamma,p} \circ \phi(\xi t) \circ \psi^{-1}_{\gamma,p}).
\end{equation}

\begin{lemm}\label{formphi}
Let \( \phi \) be an endomorphism satisfying equation \eqref{phi}. Then the matrix representation of \( \phi \) is of the form
\[
A=(A_{i,j})_{0\leqslant i,j\leqslant f_p},
\]
where each \( A_{i,j} \) is a \( k_{p,l_i} \times k_{p,l_j} \) matrix, and whose each element \( a \) has valuation in \( t \) satisfying:
\[
\nu(a)\geqslant l_j - l_i -1 \pmod{e_p},
\]
where \( l_i \), for \( i=1,\dots,f_p \), are the indices of \( k_{p,l} \) that are nonzero.
\end{lemm}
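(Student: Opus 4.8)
The plan is to take the defining relation \eqref{phi} as the starting point and read it off block by block in a basis of $E_p$ adapted to the eigenspace decomposition of $\psi_{\gamma,p}$. First I would fix a basis in which $\psi_{\gamma,p}$ is diagonal: since $\psi_{\gamma,p}$ has finite order $e_p$ it is diagonalizable, and by the definition of the local type its eigenvalues are the roots of unity $\xi_p^{l_1},\dots,\xi_p^{l_{f_p}}$, where $l_1,\dots,l_{f_p}$ are the indices with $k_{p,l_i}\neq 0$ and the eigenspace $V_i$ attached to $\xi_p^{l_i}$ has dimension $k_{p,l_i}$. Writing $A=(A_{i,j})$ for the matrix of $\phi$ in this basis, the block $A_{i,j}$ is the component $V_j\to V_i$, hence a $k_{p,l_i}\times k_{p,l_j}$ matrix with entries in $\mathcal{O}_p=\mathbb{C}[[t]]$.

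The key step is to evaluate the right-hand side of \eqref{phi} blockwise. Since $\psi_{\gamma,p}$ acts on $V_i$ (resp.\ $V_j$) as the scalar $\xi_p^{l_i}$ (resp.\ $\xi_p^{l_j}$), the conjugation $\psi_{\gamma,p}\circ(-)\circ\psi^{-1}_{\gamma,p}$ multiplies the $(i,j)$ block by $\xi_p^{l_i-l_j}$. Incorporating the extra factor $\xi=\xi_p$ coming from the canonical linearization of $K_X$, relation \eqref{phi} becomes, block by block,
\[
A_{i,j}(t)=\xi_p^{\,1+l_i-l_j}\,A_{i,j}(\xi_p t).
\]
I would then expand a single scalar entry $a(t)=\sum_{\nu\geqslant 0}a_\nu t^\nu$ and compare coefficients: the equation forces $a_\nu=\xi_p^{\,1+l_i-l_j+\nu}\,a_\nu$ for every $\nu$, so $a_\nu$ can be nonzero only when $1+l_i-l_j+\nu\equiv 0 \pmod{e_p}$, that is, when $\nu\equiv l_j-l_i-1 \pmod{e_p}$. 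In particular the valuation $\nu(a)$, being the smallest such index, is congruent to $l_j-l_i-1$ modulo $e_p$, and hence is at least the least nonnegative residue of $l_j-l_i-1$, which is the asserted bound.

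I do not expect a genuine obstacle here: once \eqref{phi} is granted, the argument is pure bookkeeping. The only points requiring care are orientational, namely correctly recording that $A_{i,j}$ maps $V_j$ into $V_i$ (so that conjugation contributes $l_i-l_j$ and not its negative), and keeping the extra $+1$ in the exponent produced by the $K_X$-linearization. It is worth stressing that what the computation really establishes is a \emph{congruence}: the nonzero coefficients of each entry of $A_{i,j}$ fill out the entire class $l_j-l_i-1 \pmod{e_p}$, the stated inequality being its immediate consequence once the right-hand side is read as a residue modulo $e_p$. I would record this sharper congruence explicitly, since it is precisely what is needed when computing the valuations of the coefficients $\mathrm{Tr}(\wedge^{i}\varphi)$ of the characteristic polynomial in the determination of $\al W^\theta$.
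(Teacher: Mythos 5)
Your proof is correct and follows essentially the same route as the paper: derive the blockwise relation $A_{i,j}(t)=\xi_p^{\,1+l_i-l_j}A_{i,j}(\xi_p t)$ from \eqref{phi} and the eigenspace decomposition, then compare power-series coefficients to see that the support of each entry is contained in the residue class $l_j-l_i-1 \pmod{e_p}$, which gives the valuation bound. Your closing remark that the nonzero coefficients \emph{fill out} the whole class slightly overstates what the computation shows (they are merely supported in that class), but this does not affect the lemma.
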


\begin{proof} Let $\phi$ be a $\Gamma-$invariant Higgs field  with respect to a type $\theta$. We denote by $A$  the associated matrix around a ramification $p \in R$ with the variable $t$. Hence, by  Equation \eqref{phi} the $(i,j)$-block of $A$  satisfies:
    $$A_{i,j}(t)=\xi^{1+l_i-l_j}A_{i,j}(\xi t), $$
    therefore the elements of the block $A_{i,j}$ have the same valuation and one gets for any element $a$ of this block a lower bound of the valuation: $$\nu(a)\geqslant l_j - l_i -1 \ \text{mod $e_p$}.$$  
    %%%%%%%%%
    This last inequality is a consequence of the following fact: let $\mathcal{O}_p\simeq\bb C[[t]]$ the local ring around $p$ and let $f(t)=\sum_{i \geqslant 0}a_i \ t^{i}$ be a local function around $p$, such that for some $s \in \{1,2, \dots e_p-1 \}$ we have:
    $$f(t)=\xi^sf(\xi t).$$
    Hence, the coefficients of $f$ for all $i$  satisfies the equation: $a_i=\xi^{s+i} a_i$, therefore
    $$s+i \not\equiv 0 \pmod{e_p} \implies a_i=0,$$
    finally there exists $g \in \bb C [[t]]$ such that 
    $$ f(t)=t^{\nu} g(t^{e_p}),$$ 
    where $\nu:=-s \pmod{e_p}$, in particular $\nu(f) \geqslant \nu$. This concludes the proof.  
    
\end{proof}

For all \( i\in\{1,\dots,r\} \), define the map \( \delta_p(i) \) as follows: For \( 1\leqslant j \leqslant  \max(k_{p,i}) \), set
\[
\delta_p(i):=j \quad \text{if} \quad \sum_{k=1}^{j-1} \mu_p(k)  < i\leq  \sum_{k=1}^{j} \mu_p(k),
\]
where 
\[
\mu_p(j):= \mathrm{Card} \{  k_{p,i} \geqslant j \}.
\]
This map can be defined using the Yang diagram, for example, for $r=23$, and  the type $\theta_p=(8,6,4,3,1)$, the values of $\delta_p$ are given in 
$$
\begin{ytableau}
   1& 2 & 3 & 4 & 5 & 6& 7 &8 \\
  1 & 2 & 3 & 4 & 5 &6 \\
1& 2 & 2 & 3& 4 \\ 
  1 & 2 & 3 \\
  1 
\end{ytableau} 
$$
where the order is from the above to the bottom, from the left to the right.

 \begin{theo} For all $p \in R$ and $i\in\{1,\cdots, r\} $  we have:
 $$ \nu(\mathrm{Tr}(\wedge^{i} \phi)) \geqslant e_p \delta_p(i)-i.$$
 \end{theo}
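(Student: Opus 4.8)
The goal is to bound the $t$-valuation of $\mathrm{Tr}(\wedge^i\phi)$ from below, where $\phi$ is a $\Gamma$-invariant Higgs field whose local matrix $A=(A_{i,j})$ around $p$ satisfies the valuation constraints of Lemma~\ref{formphi}, namely that every entry of the block $A_{i,j}$ has valuation at least $l_j-l_i-1\pmod{e_p}$. The trace of $\wedge^i\phi$ is, up to sign, the $i$-th elementary coefficient of the characteristic polynomial of $A$; equivalently it is the sum of all $i\times i$ principal minors of $A$. Each such minor is a signed sum of products $\prod_{s} a_{\sigma(s),s}$ running over $i$ chosen indices with a permutation $\sigma$ of them, so the valuation of $\mathrm{Tr}(\wedge^i\phi)$ is at least the minimum, over all ways of choosing $i$ rows/columns and a matching between them, of the sum of the entry-valuations. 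The plan is therefore to set up this combinatorial minimization and show that its minimum equals $e_p\delta_p(i)-i$.

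First I would reindex everything by the flattened basis of $E_p$ coming from the Young diagram: the $r$ basis vectors are partitioned into the $f_p$ eigenspaces, the $m$-th of which has dimension $k_{p,l_m}$ and ``level'' $l_m\in\{1,\dots,e_p\}$. An entry sitting in block $(a,b)$ contributes a valuation which, by Lemma~\ref{formphi}, is the representative in $\{0,1,\dots,e_p-1\}$ of $l_b-l_a-1\pmod{e_p}$; crucially this depends only on the two levels, not on which basis vectors inside the blocks are chosen. So I would reduce the problem to: choose a multiset of $i$ basis vectors and a permutation $\sigma$ of them, and minimize $\sum_s v(l_{\sigma(s)},l_s)$ where $v(\alpha,\beta)=(\beta-\alpha-1)\bmod e_p$. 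Since $\sigma$ decomposes into cycles, and for a cycle on levels $\alpha_1\to\alpha_2\to\cdots\to\alpha_c\to\alpha_1$ the sum of the $v$'s is $\sum(\alpha_{s+1}-\alpha_s-1)\equiv -c\pmod{e_p}$, each cycle of length $c$ contributes a nonnegative integer congruent to $-c$ mod $e_p$, hence at least $e_p\lceil c/e_p\rceil - c$ when the levels are arranged to make the telescoping sum as small as possible. The key point is that to minimize one wants cycles that ``wrap around'' the $e_p$ levels as few times as necessary, and the cheapest configuration picks the $i$ entries greedily from the shortest columns of the Young diagram first, which is exactly what $\delta_p$ records.

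The heart of the argument is the identification of this combinatorial minimum with $e_p\delta_p(i)-i$. Here I would argue that reading the diagram column by column (left to right, top to bottom), the quantity $\mu_p(j)$ is the height of the $j$-th column and $\delta_p(i)$ is the index of the column containing the $i$-th cell; each full ``column-cycle'' using one vector from each of the occupied levels in a given column-range contributes $e_p$ to the total valuation while consuming as many indices as the combined heights, so accumulating $i$ indices forces $\delta_p(i)$ wrap-arounds, each costing $e_p$, minus the $i$ discount coming from the $-1$ in each $v$. I would make this precise by an explicit optimal assignment realizing the bound (to show the inequality is the right one, i.e. not improvable) and a lower-bound argument valid for every assignment (which is what the theorem asserts). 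The lower bound is the half that is actually needed, and it follows from the cycle computation above together with the observation that the number of indices one can cover with total valuation strictly less than $e_p m$ is at most $\sum_{j=1}^{m}\mu_p(j)$.

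The main obstacle I anticipate is the combinatorial bookkeeping in translating the cycle-wise congruence $\sum_s v(l_{\sigma(s)},l_s)\equiv -(\text{cycle length})\pmod{e_p}$ into the sharp global bound $e_p\delta_p(i)-i$, and in particular justifying that the greedy ``shortest-columns-first'' selection really is optimal rather than merely an upper bound. One must be careful that permutations need not respect the block structure, so the minimization is genuinely over all matchings of the chosen index multiset; the congruence-per-cycle argument is what controls this, since it shows every admissible term has valuation in the same residue class and the residues add up predictably. Once the per-cycle nonnegativity and the counting estimate $\#\{\text{indices of valuation}<e_p m\}\le\sum_{j\le m}\mu_p(j)$ are in hand, the stated inequality $\nu(\mathrm{Tr}(\wedge^i\phi))\ge e_p\delta_p(i)-i$ drops out, and I would close by noting the diagram example $\theta_p=(8,6,4,3,1)$ as a sanity check.
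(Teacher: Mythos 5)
Your overall strategy --- reduce $\mathrm{Tr}(\wedge^i\phi)$ to principal $i\times i$ minors, bound each permutation summand via the block-valuation lemma, and control the sum through the cycle decomposition --- is sound, and it is genuinely different from the paper's argument. The paper first treats $i=r$: for a permutation summand the sum of the lower bounds equals $e_p m - r$, where $m$ is the number of chosen entries lying weakly below the block diagonal, and one argues $m\geqslant \max_l k_{p,l}=\delta_p(r)$; the general case is then reduced to the chosen $i\times i$ principal submatrix by observing that the $\delta$-function $\tilde\delta_p$ of the sub-Young-diagram satisfies $\tilde\delta_p(j)\geqslant\delta_p(j)$. Your version treats all $i$ uniformly: writing the sum of the lower bounds along a permutation as $e_pW-i$, where $W$ is the total number of times the cycles wrap around the circle of $e_p$ levels, you must show $W\geqslant\delta_p(i)$. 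This is a cleaner organization, and the correct way to finish is the one you gesture at: within a single wrap the partial advances are distinct modulo $e_p$, so each level is visited at most once per wrap, hence level $l$ is visited at most $\min(k_{p,l},W)$ times in total, whence $i\leqslant\sum_l\min(k_{p,l},W)=\sum_{j=1}^{W}\mu_p(j)$ and therefore $W\geqslant\delta_p(i)$ by the very definition of $\delta_p$.

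However, the counting statement you actually write down --- that the number of indices one can cover with total valuation strictly less than $e_pm$ is at most $\sum_{j\leqslant m}\mu_p(j)$ --- is false as phrased, and it is the one step that would fail if taken literally. Take $e_p=f_p=2$, $k_{p,1}=k_{p,2}=2$, $r=4$, and the permutation exchanging the two blocks: every entry lies in an off-diagonal block, each lower bound is $0$, so all $i=4$ indices are covered with total valuation $0<e_p\cdot 1$, yet $\sum_{j\leqslant 1}\mu_p(j)=2$. The estimate must be stated in terms of the wrap number $W=(V+i)/e_p$ rather than the valuation $V$ itself (in the example $W=2$ and $4\leqslant\mu_p(1)+\mu_p(2)$ holds, as does the theorem's bound $e_p\delta_p(4)-4=0$). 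With that correction your argument closes and yields exactly $\nu(\mathrm{Tr}(\wedge^i\phi))\geqslant e_p\delta_p(i)-i$; note also that a principal minor selects a genuine subset of $i$ distinct basis vectors, not a multiset --- only the associated levels form a multiset. The question of whether the bound is attained (your ``optimality of the greedy selection'') is not needed for the theorem and can be dropped.
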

 \begin{proof}
   %  \textcolor{red}{We need a proof. It seems to me sufficient to show by induction that $\nu(Tr(\Lambda^k\phi))\geqslant\nu(Tr(\Lambda^{k-1}\phi))+\nu(Tr(\phi))$} We prove this inequality by induction on $i$.  This is true for $i=1$ because we have: $$\nu(\mathrm{Tr}(\wedge^{1} \phi))=\nu(\mathrm{Tr}(\phi))=e_p-1=e_p\delta_p(1)-1.$$Suppose that the inequality is true for $i$ and we show it for $i+1$.
 %    We recall an equality for characteristic coefficients:
 %    \begin{prop}[Characteristic Coefficients] Let $V$ be a finite dimensional vector space and $\phi \in End(V)$ an endomorphism, then we have: $$\mathrm{Tr}(\wedge^{i}\phi)=\frac{-1}{i} \sum_{k=0}^{i-1} (-1)^{(i-k)} \ \ \mathrm{Tr}(\wedge^{k}\phi) \ \mathrm{Tr}(\phi^{i-k}).  $$        
 %    \end{prop}

  %  \textcolor{red}{Peut etre il nous faut une preuve direct}
We start with the case \( i = r \). We need to show that  
\[
\nu(\wedge^{r} \phi) \geqslant e_p \delta_p(r) - r.
\]  
It is sufficient to show that any summand of the determinant of \( \phi \) has valuation greater than or equal to \( e_p \delta_p(r) - r \). Each summand contains an element from each row and each column of the matrix of \( \phi \). Since the valuation of a summand is the sum of the valuations of its elements, Lemma (\ref{formphi}) implies that its valuation is at least \( me_p - r \), for some positive integer \( m \) depending on the chosen summand. Thus, it suffices to show that \( m \geqslant \delta_p(r) \) always holds.  

Note that the integers \( l_i \) are ordered as \( l_1 < l_2 < \cdots < l_{f_p} \). Moreover, whenever an element in the summand belongs to a block \( A_{i,j} \) with \( i > j \), its valuation does not contribute an \( e_p \). Consequently, the minimal number of \( e_p \) terms occurs when we choose the maximum possible number of elements strictly below the diagonal blocks. In this case, exactly \( \max k_{p,i} \) elements remain above the diagonal. Clearly, we have \( \max k_{p,i} = \delta_p(r) \), which completes the proof for \( i = r \).  

For the general case, we show that any diagonal \( i \)-minor satisfies the inequality. This follows immediately from the observation that if \( \tilde{\delta}_p \) is the function associated with the chosen principal \( i \)-minor, then  
\[
\tilde{\delta}_p(j) \geqslant \delta_p(j)
\]  
for any \( j \leqslant i \). This inequality follows from the Young diagram description of the function \( \delta \).  

 \end{proof}

For any type $\theta$, we associate the divisors $D_i$, for $i=1,\cdots, r$, as follows: 
$$D_i:= \sum_{p \in R} (i-e_p \delta_p(i))p.$$ 
\begin{defi} [The $\Gamma-$invariant Hitchin base]

We define a vector subspace $\al W^\theta$ in $\al W^\Gamma$  that depends on the type $\theta$ as follows: 
$$\al W^\theta:=\bigoplus_{i=1}^r H^0(X,K^i_X \otimes\mathcal{O}_X(D_i))^{\Gamma}.$$

%where $a_p(i):= e_p \delta_p(i)$. 
\end{defi}

\begin{prop}\label{descends}  For any $i=1,\cdots, r$ we have $$ K^i_X \otimes\mathcal{O}_X(D_i)\simeq \pi^*\left( K^i_Y \otimes \mathcal{O}_Y \left( \sum_{q \in B} (i-\delta_q(i))q \right)\right).$$
\end{prop}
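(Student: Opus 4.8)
The plan is to verify the isomorphism by pulling back the right-hand side to $X$ and comparing it, as a twist of $(\pi^*K_Y)^i$, with the left-hand side rewritten through the Hurwitz formula; the whole statement then reduces to a single identity between divisors on $X$, which is checked coefficient by coefficient at each ramification point.

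First I would apply the Hurwitz isomorphism $K_X \simeq \pi^* K_Y \otimes \mathcal{O}_X(R)$ established earlier, which upon taking $i$-th powers gives
\[
K_X^i \otimes \mathcal{O}_X(D_i) \simeq (\pi^* K_Y)^i \otimes \mathcal{O}_X(iR + D_i).
\]
On the other hand, pulling back the right-hand side and using $\pi^*(K_Y^i) = (\pi^* K_Y)^i$ yields
\[
\pi^*\Bigl( K_Y^i \otimes \mathcal{O}_Y\bigl( \textstyle\sum_{q\in B}(i-\delta_q(i))q \bigr)\Bigr) \simeq (\pi^* K_Y)^i \otimes \mathcal{O}_X\Bigl( \textstyle\sum_{q\in B}(i-\delta_q(i))\,\pi^*q \Bigr).
\]
Since both expressions carry the common factor $(\pi^*K_Y)^i$, it suffices to prove the divisor identity $iR + D_i = \sum_{q\in B}(i-\delta_q(i))\,\pi^*q$ on $X$.

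For the right-hand divisor I would use that, for a Galois cover, each $q\in B$ has fiber $\pi^{-1}(q)$ consisting of points of common ramification index $e_q$, so that $\pi^* q = e_q \sum_{p\in\pi^{-1}(q)} p$. Since by definition $e_q = e_p$ and $\delta_q(i)=\delta_p(i)$ for $p\in\pi^{-1}(q)$, this gives
\[
\sum_{q\in B}(i-\delta_q(i))\,\pi^* q = \sum_{p\in R} e_p\bigl(i-\delta_p(i)\bigr)\,p.
\]
For the left-hand divisor I would insert the tame ramification expression $R = \sum_{p\in R}(e_p-1)p$ (valid since everything is over $\mathbb{C}$) and compute the coefficient of $p$ in $iR+D_i$ as
\[
i(e_p-1) + \bigl(i - e_p\delta_p(i)\bigr) = e_p\bigl(i-\delta_p(i)\bigr),
\]
which matches the previous display term by term. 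This establishes the divisor identity and hence the isomorphism.

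The computation itself is elementary; the only points requiring genuine care are the two inputs feeding into it. The first is the constancy of $e_p$ and $\delta_p(i)$ along each fiber $\pi^{-1}(q)$ — this is precisely what makes the symbols $e_q$ and $\delta_q(i)$ well defined, and it rests on the conjugacy of the isotropy groups together with the $\Gamma$-equivariant relation between the local types at points lying over a fixed branch point. The second is the identification $R=\sum_{p\in R}(e_p-1)p$. Once these are in place the comparison is forced, so I expect no serious obstacle beyond bookkeeping the equivariance that guarantees well-definedness of the fiberwise data.
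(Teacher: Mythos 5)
Your proof is correct and follows essentially the same route as the paper: both apply the Hurwitz isomorphism $K_X\simeq\pi^*K_Y\otimes\mathcal{O}_X(R)$, use $\pi^*q=\sum_{p\in\pi^{-1}(q)}e_p\,p$ and $R=\sum_p(e_p-1)p$, and reduce to the coefficient identity $i(e_p-1)+(i-e_p\delta_p(i))=e_p(i-\delta_p(i))$. The only difference is presentational (you isolate the divisor identity explicitly and flag the well-definedness of $e_q$ and $\delta_q(i)$, which the paper treats as a definitional convention), so there is nothing to correct.
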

\begin{proof} We recall that for a covering $\pi:X \rightarrow Y$ and a reduced divisor $D=\{q\} \subset Y$ then one has:
$$  \pi^* (D)= \sum_{p\in \pi^{1}(q)} e_pp.
$$
 Using this fact one gets: 
 \begin{align*}
 K^i_X \otimes\mathcal{O}_X(D_i)&= K^i_X \otimes\mathcal{O}_X\left(\sum_{p \in R} (i-e_p \delta_p(i))p \right) \\
 &=\left( \pi^*(K_Y)\otimes\mathcal{O}_X(R) \right)^{i} \otimes \mathcal{O}_X\left(\sum_{p \in R} (i-e_p \delta_p(i))p \right)\\
 &= \pi^*(K_Y^{i})\otimes \mathcal{O}_X\left(\sum_{p \in R} i (e_p-1)+\sum_{p \in R} (i-e_p \delta_p(i))p  \right)\\
 &=\pi^*(K_Y^{i})\otimes \mathcal{O}_X\left(\sum_{p \in R} e_p(i-\delta_p(i))  \right) \\
 &=\pi^*\left( K^i_Y \otimes \mathcal{O}_Y \left( \sum_{q \in B} (i-\delta_q(i))q \right)\right).
  \end{align*}

\end{proof}
As a consequence of the above result we get for all $i$ an isomorphism:
$$\al W_i^\theta:=H^0(X, K^i_X \otimes\mathcal{O}_X(D_i))^{\Gamma} \simeq H^0 \left(Y, K^i_Y \otimes \mathcal{O}_X \left(\sum_{q \in B}\left(  i-\delta_q(i) \right)q \right) \right) .$$
To calculate the dimension of the $\Gamma-$invariant Hitchin base,  we recall the degree formula:
\begin{align*}
 \deg \left(  K^i_X \otimes\mathcal{O}_X(D_i)\right)&=n \deg \left(   K^{i}_Y \otimes\mathcal{O}_Y\left( \sum_{q\in B} (i-\delta_q(i))q \right)\right) \\
 &= 2n (g_Y-1)i+n \sum_{q\in B} (i-\delta_q(i)).
 \end{align*}
 
%%%%%%%%%%%%%%%%%%%%%%%%%%%%%%%%%%%%%%%%%%%%%%%%%%%%%%%%%%%%%%%%%%%%%%%%%-Calcule de Dimension par  Point fixe %%%%%%%%%%%%%%%%%%%%%%%%%%%%%%%%%%%%%%%%%%%%%%%%%%%%%%%%%%%%%%%%%%%%%%
We set  $\mathcal{W}_i:=H^0(X,K^i_X \otimes\mathcal{O}_X(D_i))$ then we have:
%\begin{align*}
%\dim \mathcal{W}_i^{\theta}&= \frac{1}{n} \left( \dim \al W_i+ \frac{1}{2} \deg R \right)
%2i(g_Y-1)+\sum_{q\in B}(i-\delta_q(i)).
%\end{align*}

\begin{align*} 
\dim \mathcal{W}_i^{\theta}=
\left\{ \begin{array}{ll}
1+2 (g_Y-1)i+\sum_{q\in B} (i-\delta_q(i)) & \text{if $i=1$} \\ \\
2 (g_Y-1)i+\sum_{q\in B} (i-\delta_q(i)) & \text{if $i \neq 1$ } \\
\end{array} \right.
\end{align*}
hence,
\begin{align*}
 \dim \mathcal{W}^{\theta}&= \sum_{i=1}^r \dim \mathcal{W}_i^{\Gamma, \theta} = r^2(g_Y-1)+1+ \sum_{q \in B} \sum_{i=1}^r (i-\delta_q(i)) \\
 &=\dim \al U_X^{\theta}(r,d).
\end{align*}

\section{Spectral curves and Hitchin fibers}
Let $q : P:=\bb P(K_X^{*}\oplus\mathcal{O}_X) \longrightarrow X$, the natural projection map, and $\mathcal{O}_X(1)$ the relative line bundle. As $q_*(K_X^{*}  \oplus\mathcal{O}_X)=K_X^{*}\oplus \mathcal{O}_X$ we take the natural section $x$ associated to $1$,  and we have $q_*(q^*(K_X)\otimes\mathcal{O}_X(1))= \mathcal{O}_X\oplus K_X$, denote by $y$ the section associated to $1$. Now for any element $s=(s_i)_i \in \al W^{\theta}$, we associate the polynomial 
$$ x^r+q^*(s_1)yx^{r-1}+...+q^*(s_{r-1})y^{r-1}x+q^*(s_{r})y^{r},$$
denote by $X_s$ the associated zero scheme and denote by $q:X_s \rightarrow X$ the restriction map. As the canonical bundle is equipped with a $\Gamma-$action, the scheme $P$ is equipped with $\Gamma-$action and this action induces a $\Gamma-$action on  $X_s$  since $s$ is $\Gamma-$invariant. Moreover, the map $q:X_s \rightarrow X$ is $\Gamma-$equivariant.  
%%%%%%%%%%%%%%%
\begin{lemm}\label{negative}
    For any $i\in\{1,\cdots,r\}$, we have $i-e_p\delta_p(i)\leqslant 0$.
\end{lemm}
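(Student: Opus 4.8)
The plan is to unwind the combinatorial definition of $\delta_p$ and reduce the claimed inequality to an elementary bound on the column heights of the Young diagram attached to $\theta_p$. Setting $j := \delta_p(i)$, the defining condition reads
\[
\sum_{k=1}^{j-1}\mu_p(k) \;<\; i \;\leqslant\; \sum_{k=1}^{j}\mu_p(k),
\]
so the inequality $i - e_p\,\delta_p(i)\leqslant 0$ is equivalent to $i \leqslant e_p\, j$. Since the right-hand bound above already gives $i\leqslant \sum_{k=1}^{j}\mu_p(k)$, it suffices to show that this partial sum is at most $e_p\,j$.

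First I would observe that each term satisfies $\mu_p(k)\leqslant e_p$ for every $k\geqslant 1$. Indeed, $\mu_p(k)=\mathrm{Card}\{\,l : k_{p,l}\geqslant k\,\}$ counts the rows of the diagram of length at least $k$, hence is at most the number $f_p$ of nonzero entries among the $k_{p,l}$; and $f_p\leqslant e_p$ by construction. Geometrically, $\mu_p(k)$ is just the height of the $k$-th column of the Young diagram of $\theta_p$, a diagram with at most $f_p\leqslant e_p$ rows. It follows that
\[
\sum_{k=1}^{j}\mu_p(k)\;\leqslant\; j\,f_p\;\leqslant\; j\,e_p,
\]
and combining with $i\leqslant \sum_{k=1}^{j}\mu_p(k)$ yields $i\leqslant e_p\,\delta_p(i)$, which is the assertion.

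I do not expect a genuine obstacle here: the statement is pure bookkeeping once $\delta_p$ is read as the column-by-column labelling of the boxes of the Young diagram, for then the first $j$ columns contain at most $j\,e_p$ boxes and the $i$-th box cannot lie beyond column $\lceil i/e_p\rceil$. The only point to keep in mind is that $\delta_p$ is defined precisely on the range $i\in\{1,\dots,r\}$, since $\sum_{k\geqslant 1}\mu_p(k)=\sum_l k_{p,l}=r$; hence every such $i$ admits a well-defined $j=\delta_p(i)$ and no boundary subtlety arises.
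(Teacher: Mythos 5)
Your proof is correct and follows essentially the same route as the paper's: both reduce the claim to $i\leqslant f_p\,\delta_p(i)\leqslant e_p\,\delta_p(i)$ using $\mu_p(k)\leqslant f_p\leqslant e_p$; you simply spell out the partial-sum bound that the paper leaves as "one sees from the definition of $\delta_p$."
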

\begin{proof}
    Let $f_p$ be the number of $k_{p,i}$ that are nonzero. Then clearly $e_p\geqslant f_p$. Moreover, from the definition of $\delta_p$, one sees that $i-f_p\delta_p\leqslant 0$. This implies $$i-e_p\delta_p(i)\leqslant i-f_p\delta_p(i)\leqslant 0.$$
\end{proof}
\begin{theo} For generic elements $s \in \al W^{\theta}$ the scheme $X_s$ is a integral curve of genus $r^2(g_X-1)+1$. Moreover, the ramification divisor of $q:X_s\ra X$ is contained in the pullback of $R$.
\end{theo}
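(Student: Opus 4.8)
The plan is to treat $X_s$ as a divisor in the ruled surface $P=\bb P(K_X^*\oplus \al O_X)$ and to run the classical Beauville--Narasimhan--Ramanan argument, while paying attention to the behaviour over $R$ dictated by the type $\theta$. First I would compute the genus, which is a numerical invariant independent of $s$. Since the defining equation is monic of degree $r$ in the fibre coordinate, the direct image along $q:X_s\ra X$ splits in the usual way as $q_*\al O_{X_s}\simeq \bigoplus_{i=0}^{r-1}K_X^{-i}$. Taking Euler characteristics and applying Riemann--Roch on $X$ gives $\chi(\al O_{X_s})=\sum_{i=0}^{r-1}\chi(K_X^{-i})=-r^2(g_X-1)$, so that $p_a(X_s)=1-\chi(\al O_{X_s})=r^2(g_X-1)+1$. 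This holds for every $s$ giving an effective divisor in the correct class, so once integrality is established the stated genus is immediate.

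Next I would prove integrality for generic $s$. Reducedness is equivalent to separability of the characteristic polynomial over the generic point of $X$, i.e. to non-vanishing of its discriminant; it therefore suffices to exhibit a single $s\in\al W^\theta$ whose associated polynomial has distinct roots at the generic point, after which $X_s$ is reduced for all $s$ outside a proper closed subset. For irreducibility I would show that the monodromy of $q$ over $X\sm R$ is transitive for generic $s$, equivalently that the characteristic polynomial is irreducible over $\bb C(X)$; this follows from a Bertini/monodromy argument on $\al W^\theta$, provided one checks that the vanishing conditions encoded by the divisors $D_i$ still leave enough freedom to separate points away from $R$. That last point reduces, by Lemma \ref{negative}, to the fact that each $D_i$ is supported on $R$, so that the sublinear systems $|K_X^i\otimes\al O_X(D_i)|$ remain sufficiently ample on $X\sm R$.

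Finally, and this is where I expect the main obstacle, I would analyse $q$ over the ramification points to locate the ramification divisor. Over $p\in R$ I would use the normal form of Lemma \ref{formphi}: in a local coordinate $t$ the matrix of $\phi$ has blocks $A_{i,j}$ whose entries have valuation at least $l_j-l_i-1 \pmod{e_p}$, while the coefficients $s_i$ vanish to the orders prescribed by $D_i$, with $i-e_p\delta_p(i)\le 0$ by Lemma \ref{negative}. After rescaling the fibre coordinate by the power of $t$ adapted to these valuations, the local equation of $X_s$ becomes a deformation whose branches and local different I can read off explicitly, and the task is to show that over $p$ the failure of $q$ to be étale is accounted for entirely by $q^*R$. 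The delicate part is the bookkeeping of the fractional valuations modulo $e_p$ furnished by Lemma \ref{formphi} and their matching with the prescribed orders of vanishing of the $s_i$; here the $\Gamma$-equivariance of $q$ (valid since $s$ is $\Gamma$-invariant) is what forces the degenerate eigenvalues to sit over $R$ and organises the local branches into $\Gamma$-orbits. Establishing the precise containment of the ramification divisor in the pullback of $R$ — rather than merely describing the local model — is the step I would expect to require the most care, since it amounts to controlling the discriminant section through these valuation estimates and then comparing the resulting local different with $q^*R$.
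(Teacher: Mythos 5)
Your genus computation coincides with the paper's: both push forward $\al O_{X_s}$ along $q$ to get $\bigoplus_{i=0}^{r-1}K_X^{-i}$ and apply Riemann--Roch, giving $\chi(\al O_{X_s})=-r^2(g_X-1)$. For integrality, your route (reducedness via non-vanishing of the discriminant at the generic point, irreducibility via transitivity of monodromy over $X\sm R$) is heavier than necessary and is only sketched: the transitivity of monodromy is deferred to an unproved ``Bertini/monodromy argument'' on $\al W^\theta$. The paper gets both properties at once by exhibiting the single spectral curve $x^r+s_r=0$ with $s_r\in H^0(X,K_X^r(D_r))^\Gamma$ not a proper power of a section, and invoking openness of integrality in the family; your plan would likely work but, as written, the irreducibility step is not established.

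The genuine gap is in the last clause. You correctly single out the containment of the ramification in $q^{-1}(R)$ as the delicate step, but you then only describe a program (rescale the fibre coordinate, compute the local different from the valuations of Lemma \ref{formphi}, compare with $q^*R$) without carrying any of it out, so the ``moreover'' part is not proved. The paper's argument is of a completely different and much lighter nature: it again produces one explicit member of the family, $x^r+s_r=0$ with $s_r$ chosen by weak Bertini to have only simple zeros away from $R$; such a curve is smooth away from $q^{-1}(R)$ (locally $x^r=t$ at a simple zero of $s_r$), smoothness outside $q^{-1}(R)$ is an open condition, and any singular point of a spectral curve is automatically a ramification point of $q$, whence ${\rm Sing}(X_s)\subset {\rm Ram}(X_s/X)$ and ${\rm Sing}(X_s)\subset q^{-1}(R)$ for general $s$ --- which is the containment actually used later in the paper. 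Note that this openness argument controls the singular locus rather than the full ramification divisor (the model curve $x^r+s_r=0$ is itself ramified over every zero of $s_r$, including those outside $R$), so your instinct that the literal statement about ${\rm Ram}(X_s/X)$ would require a genuine local computation is sound; but in any case your proposal, as it stands, establishes neither version of the claim.
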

\begin{proof} The proof is similar to \cite{BNR}. As the integrality is an open condition, it is sufficient to find an integral spectral curve. Consider the spectral curve $X_s$ given by the equation $x^r+s_r=0$, for some invariant section $s_r \in H^0(X,K^r(D_r))^{\Gamma}$. Now, if $s_r$ is not an $m-$th power for some section in $\mathcal{W}^{\theta}$ for $m>1$, then the curve $X_s$ is integral. This is generically true. \\
Let us calculate the genus of $X_s$. The curve sits inside the total space of the canonical line bundle $K_X$, hence 
\begin{align*}
    1-g_{X_s}=\chi(X_s,\mathcal{O}_{X_s})&=\chi(X,q_*(\mathcal{O}_{X_s}))\\
    &= \sum \chi(X,K^{i}_X)=- \deg(K_X) \frac{r(r-1)}{2}+r(1-g_X)\\
    &= -r^2 (g_X-1), 
\end{align*}
  and hence we get $g_{X_s}=r^2 (g_X-1)+1$.  
 Note that by the week Bertini theorem we can choose a section $s_r \in H^0(X,K^r(D_r))^{\Gamma}$ such that it has simple roots outside the ramification divisor $R$. Since the smoothness of the spectral curve outside $R$ is an open condition, we can take $X_s$ to be the curve of equation $x^r+s_r=0$. In particular, for general spectral curves $X_s$, we have  $Sing(X_s) \subset Ram(X_s/X) \subset q^{-1}(R)$. 
\end{proof}

We now determine the types where a general spectral curve is smooth. 
\begin{theo}\label{smooth}
    Let $\theta$ be a type. Then there exists an $s\in \al W^\theta$ such that the associated spectral curve $X_s$ is smooth if and only if for all $p\in R$ $$r-e_p\delta_p(r)\geqslant-1 \;\x{  or  }\; r-1-e_p\delta_p(r-1)=0.$$
\end{theo}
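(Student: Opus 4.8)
The plan is to reduce the global smoothness question to a local Jacobian computation over each ramification point, and to read off the two conditions from the last two coefficients of the characteristic polynomial. First I would fix $p\in R$, choose a local coordinate $t$ at $p$ with $\gamma_p\cdot t=\xi_p t$, and trivialize $K_X$ by $dt$; writing a point of $\mathrm{Tot}(K_X)$ over a neighbourhood of $p$ as $(t,\lambda)$, the spectral curve has local equation
\[
F(t,\lambda)=\lambda^r+a_1(t)\lambda^{r-1}+\cdots+a_{r-1}(t)\lambda+a_r(t),
\]
where $a_i(t)$ is the local expression of $s_i\in\al W_i^\theta\subset H^0(X,K_X^i)$. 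By the definition of $\al W^\theta$ through the divisors $D_i$, together with Lemma \ref{negative}, each $a_i$ is forced to vanish to order at least $\nu_p(i):=e_p\delta_p(i)-i\geqslant 0$, and the $\Gamma_p$-invariance of $s_i$ refines this (as in the proof of Lemma \ref{formphi}) to $a_i(t)=t^{\nu_p(i)}g_i(t^{e_p})$. Since, for general $s$, the previous theorem already gives $\mathrm{Sing}(X_s)\subset q^{-1}(R)$, it suffices to analyse $X_s$ along the fibre $t=0$ over each such $p$.

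Next I would localize to the single $\Gamma_p$-fixed point $(0,0)$. Because $\nu_p(i)=0$ forces $e_p\mid i$ (as then $e_p\delta_p(i)=i$), the restriction $F(0,\lambda)=\lambda^r+\sum_{\nu_p(i)=0}a_i(0)\lambda^{r-i}$ has the form $\lambda^{\,r\bmod e_p}\,H(\lambda^{e_p})$, so its nonzero roots occur in free $\Gamma_p$-orbits of size $e_p$. One checks that the coefficients of $H$ which are not forced to vanish are exactly the first $\min_i k_{p,i}$ of them, and that they vary freely as $s$ ranges over $\al W^\theta$; hence for general $s$ the nonzero roots of $F(0,\cdot)$ are simple, and $X_s$ is smooth at every fibre point with $\lambda\neq 0$. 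Thus smoothness over $p$ is equivalent to smoothness at $(0,0)$.

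The heart of the argument is then the Jacobian criterion at $(0,0)$, where a direct computation gives
\[
F(0,0)=a_r(0),\qquad \partial_\lambda F(0,0)=a_{r-1}(0),\qquad \partial_t F(0,0)=a_r'(0).
\]
So $(0,0)$ lies on $X_s$ precisely when $\nu_p(r)\geqslant 1$, and when it does it is smooth iff $a_{r-1}(0)\neq 0$ or $a_r'(0)\neq 0$. Now $a_{r-1}(0)\neq 0$ is attainable iff $\nu_p(r-1)=0$, i.e.\ $r-1-e_p\delta_p(r-1)=0$; and $a_r'(0)\neq 0$ is attainable iff $\nu_p(r)\leqslant 1$, i.e.\ $r-e_p\delta_p(r)\geqslant-1$ (if $\nu_p(r)\geqslant 2$ then $a_r$ vanishes to order $\geqslant 2$ and $a_r'(0)=0$ for every $s$, while if $\nu_p(r)=1$ the invariant normal form gives $a_r(t)=g_r(0)\,t+O(t^{e_p+1})$ with $g_r(0)\neq 0$ generically). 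For necessity, if both conditions fail at some $p$, i.e.\ $\nu_p(r)\geqslant 2$ and $\nu_p(r-1)\geqslant 1$, then $F(0,0)=\partial_\lambda F(0,0)=\partial_t F(0,0)=0$ for \emph{every} $s\in\al W^\theta$, so $(0,0)$ is a singular point of all $X_s$ and no smooth member exists. For sufficiency, if at each $p$ one of the two conditions holds, I would choose $s$ general so that, simultaneously for all $p\in R$, the valuations $\nu_t(a_i)$ are exactly minimal, $a_r'(0)\neq 0$ whenever $\nu_p(r)=1$, and the free coefficients of $H$ are generic; combined with smoothness away from $R$, this produces a smooth $X_s$.

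The step I expect to be the main obstacle is this last genericity claim: that the maps sending $s\in\al W^\theta$ to the relevant leading Taylor coefficients of the $a_i$ at the points of $R$ are surjective, so that "general $s$" genuinely achieves exact minimal valuation and simple nonzero fibre roots at all $p$ at once. Via the isomorphism $\al W_i^\theta\simeq H^0\big(Y,K_Y^i(\textstyle\sum_q(i-\delta_q(i))q)\big)$ of Proposition \ref{descends}, each such requirement becomes a non-vanishing (base-point-freeness) statement for sections of a line bundle of large degree on $Y$, which holds because $g_Y\geqslant 2$ and the degrees grow with $r$; the care lies in checking that these finitely many open conditions are simultaneously satisfiable, i.e.\ that the corresponding linear functionals on $\al W^\theta$ are not identically zero. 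Everything else — the fibre decomposition, the Jacobian computation, and the translation back into the stated inequalities — is then routine.
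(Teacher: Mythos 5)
Your proposal is correct and follows essentially the same route as the paper's proof: both reduce to the Jacobian criterion at the point $(t,\lambda)=(0,0)$ of the fibre over each $p\in R$ and translate the two conditions into the attainable non-vanishing of $s_{r-1}(p)$, $s_r(p)$ and $s_r'(p)$, the last of these via the descent of $K_X^r\otimes\mathcal{O}_X(D_r)$ to $Y$ (Proposition \ref{descends}). You are somewhat more systematic than the paper, which simply exhibits the test spectral data $(0,\dots,0,s_r)$ and $(0,\dots,0,s_{r-1},0)$ case by case, whereas you work with the general invariant normal form $a_i(t)=t^{\nu_p(i)}g_i(t^{e_p})$, check the nonzero fibre points explicitly, and make the simultaneous-genericity step (finitely many non-empty open conditions in the irreducible space $\al W^{\theta}$) explicit; but this is a difference of presentation rather than of method.
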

    \begin{proof}
    We use the Jacobian criterion to prove the smoothness. \\
        %Note first that, by Lemma \ref{negative}, the first condition means that $r-e_p\delta_p(r)$ is either $0$ or $-1$. \\
Assume that $r-e_p\delta_p(r)= 0$, let $s_r\in H^0(X,K_X^r)$ such that $s_r(p)\neq 0$. The spectral curve $X_s$ associated to $(0,\cdots,0,s_r)$ has equation $$x^r+s_r(t)=0.$$ We see that its derivative  with respect to $x$ at any solution over $p$ is non zero. Hence $X_s$ is smooth.\\
Suppose now that $r-e_p\delta_p(r)= -1$ and let $s_r\in H^0(X,K_X^r)$ such that $s'_r(p)\neq 0$.
Note that such a global section exists since $H^0(X,K_X^r(-p))^\Gamma\sm H^0(X,K_X^r(-2p))^\Gamma$ is non-empty (use the fact that $K_X^r(-p)$ descends to $Y$; see Proposition \ref{descends}).  The spectral curve $X_s$ associated to $(0,\cdots,0,s_r)$ has equation $$x^r+s_r(t)=0.$$ We see that its derivative with respect to $t$ over $p$, which equals $s_r'(p)$, is not zero. Hence $X_s$ is smooth.\\

Assume now that $r-1-e_p\delta_p(r-1)=0$. Then we can choose a section $$s_{r-1}\in H^0(X,K_X^{r-1})$$ such that $s_{r-1}(p)\not=0$, and all the other sections $s_i=0$. Then the equation of the associated spectral curve over $p$ is $$x^r+s_{r-1}(p)x=0.$$ For the point $x=0$, the Jacobian criterion gives $$\dfrac{\partial}{\partial x}(x^r+s_{r-1}(p)x)=rx^{r-1}+s_{r-1}(p)=s_{r-1}(p)\not=0.$$ Hence this point is smooth. If $x$ is an other nonzero points, we have $x^{r-1}+s_{r-1}(p)=0$, but since $r\geqslant 2$, we deduce that $$rx^{r-1}+s_{r-1}(p)\not=0.$$ So $x$ is also smooth. \\ 
        Conversely, assume that both conditions are not satisfied. This means that for all spectral data $s$, we have $s_{r-1}(p)=s_r(p)=s_r'(p)=0$. It follows that the point $x=0, t=0$ is a singular point by the Jacobian criterion.
       %  But $s_r(p)=0$ implies that $s_r'(p)=0$. Indeed, If $r-e_p\delta_p(i) \leq -2$ the implication is trivially true. Now suppose we have $r-e_p\delta_p(r)=-1$, we take the natural short exact sequence given by the evaluation:
       %  $$0 \rightarrow K^r_X(-2p) \rightarrow K^r_X(-p) \rightarrow \mathbb{C}_p \rightarrow0 $$
       %  As the $\deg(K^r(-2p))=2r(g_X-1)-2 > 0$ we get $H^1(X,K^r(-2p) )=0 $, this gives a short exact sequence in cohomology: 
       %  $$0 \rightarrow H^0(X,K^r_X(-2p)) \rightarrow H^0(X,K^r_X(-p)) \rightarrow \mathbb{C}_p \rightarrow0 $$
       %  The natural maps are $\Gamma-$equivariant hence we get: 
       %  $$0 \rightarrow H^0(X,K^r_X(-2p))^{\Gamma} \rightarrow H^0(X,K^r_X(-p))^{\Gamma} \rightarrow \mathbb{C}^{\Gamma} \rightarrow0 $$
       % To complete  the poof we need to show that $\bb{C}^{\Gamma}$ is trivial. Now an element $s\in H^0(X,K^r_X(-p))^{\Gamma}$ satisfies for $\xi$ an $e_p-$root of unity %a generator for $\Gamma_p$
       % and  $t$ a local coordinate around $p$,  the equation $$\xi^{r+1}s(\xi t)=s(t),$$ since $r+1=e_p\delta_p(r)$, it follows that  the evaluation at $p$ satisfies: 
       % $$  ev_p(s)=\xi \ ev_p(s)$$
       % as the action of $\Gamma$ on $\bb C$ is not trivial we get $ \bb{C}^{\Gamma}=\{0\}.$
       %  $x=0$ is singular by the Jacobian criterion. This concludes the proof. 
    \end{proof}
    \smallskip
\begin{rema}\label{Yangsmooth}
    A type $\theta$ verifies the first condition $r-e_p\delta_p(r)=0$ if and only if  its Yang diagram is rectangular with $e_p$ raws, in other wards it is of the form $$
\begin{ytableau}
   \,& \, & \, & \, & \, & \,  \\
   \,& \, & \, & \, & \, & \,  \\
      \,& \, & \, & \, & \, & \,  \\
   \,& \, & \, & \, & \, & \,  \\
\end{ytableau} 
$$
Indeed, under this condition, we have the inequality $$r\leqslant f_p\delta_p(r) \leqslant e_p\delta
_p(r)=r,$$ hence $f_p=e_p$. This implies that the Yang diagram has $e_p$ rows and, since it has $r$ cells, the result follows. The converse is clear.\\
Now, if $\theta$ verifies the condition $r-e_p\delta_p(r)=-1$, then  its Yang diagram is of the form 
$$ \begin{ytableau}
   \,& \, & \, & \, & \, & \,  \\
   \,& \, & \, & \, & \, & \,  \\
      \,& \, & \, & \, & \, & \,  \\
   \,& \, & \, & \, & \,  \\
\end{ytableau} 
$$
That is rectangular with just one missing cell. Indeed, we have $$r\leqslant f_p\delta_p(r)\leqslant e_p\delta_p(r)=r+1,$$
hence either we have $f_p\delta_p(r)=r$ or $r+1$. In the first case, we see that $$(e_p-f_p)\delta_p(r)=1,$$
which implies that the Yang diagram has only one column of length $e_p-1=f_p=r$. In the second case, we see that $f_p\delta_p(r)=e_p\delta_p(r)=r+1$, in particular $f_p=e_p$, and hence the Yang diagram has the desired form.  \\

For the second condition, namely $r-1=e_p\delta_p(r-1)$, the Yang diagram is given in the form 
$$\begin{ytableau}
   \,& \, & \, & \, & \, & \, &\,  \\
   \,& \, & \, & \, & \, & \,  \\
      \,& \, & \, & \, & \, & \,  \\
   \,& \, & \, & \, & \, & \,  \\
\end{ytableau} $$
Indeed, Assuming $\delta_p(r)=\delta_p(r-1)$, we see that $$r\leqslant f_p\delta_p(r)\leqslant e_p\delta_p(r)=e_p\delta_p(r-1)=r-1,$$ which absurd. Hence $\delta_p(r)=\delta_p(r-1)+1$. Moreover, we have $$r-1= f_p\delta_p(r-1)\leqslant e_p\delta_p(r-1)=r-1,$$ hence $f_p=e_p$ and $r-e_p\delta_p(r)=1-e_p<0$. \\
\end{rema}
%%%%%%%%%%%%
    \begin{defi}
        The types that verifies the conditions of Theorem \ref{smooth} are called \emph{smooth types}. The other types are called non-smooth. 
    \end{defi}
In the following, we deal only with the smooth types. We aim to study the singular case in an upcoming project.
%\begin{prop}
 % Let $\theta$ be a non-smooth type. Then, for a general $s \in \mathcal{W}^\theta$, the singularities of the spectral curve $X_s \to X$ occur only  above the ramification divisor $R \subset X$ and they are precisely the points associated to $x = 0$.
%\end{prop}
%\begin{proof} It is clear that for general spectral data, the singular points occur above $R$. ??     
% \end{proof}
% \begin{theo}
    % For general $s=(s_i)\in\al W^\theta,$ the spectral curve $X_s$ has singular points of order $m$ over $p\in R$ if and only if there exists an $i\in \{r-m-1,\cdots,r \}$ such that  $$\nu(s_{i}) \geqslant r-i+m, $$ where $\nu$ is the valuation at $t=0$.  {\color{red} To be checked ??}
% \end{theo}
% \begin{proof}
    % The proof is similar to that of Theorem \ref{smooth}. We leave the details to the reader.   
% \end{proof}
% \begin{lemm}
%     Let $\theta$ be a type. Then there exists $j\in\{1,\cdots,r\}$ such that  $$j-e_p\delta_p(j)=0$$ if and only if  $e_p=f_p$ and $j\in\{e_p,2e_p,\cdots, le_p\}$, where $l={\rm min}\{k_{p,i}\neq0\}_i$ is the smallest nonzero multiplicity.
% \end{lemm}
% \begin{proof}
% We have  
% \[
% 0 = j - e_p \delta_p(j) \leqslant j - f_p \delta_p(j) \leqslant 0,
% \]  
% which implies that \( e_p = f_p \). From the description of \( \delta_p \) in terms of Young diagrams, we deduce that for all  
% \[
% j = e_p, \dots, le_p, \quad j - e_p \delta_p(j) = 0.
% \]  
% The converse is clear.    
% \end{proof}

\bigskip
\subsection*{Fibers of the Hitchin morphism} \noindent \smallskip

In \cite{BNR}, it is shown that the map $$T^*\al U_X(r,d)\lra \al U_X(r,d)\times \al W$$ given by the first projection and the Hitchin morphism, is dominant, and this is due to the existence of very stable vector bundles. The fibers of this map are identified, at least in the smooth case, to an open subset of the Picard group $\x{Pic}^c(X)$ of degree $c$ line bundles on the spectral curve, where $c$ is given by $-\deg (q_*\al O_{X_S})+d$(see \cite[Proposition 3.6]{BNR} for more details).\\

In the case of $\sigma-$invariant vector bundles of type $\tau$, the fibers are identified with an open subset of  the subvariety $\x{Pic}^{c,\tilde\tau}(X_s)$ of $\tilde \sigma-$invariant line bundles over the spectral curve, where the type $\tilde\tau$ is determined by the type $\tau$ (see \cite{Z}).
\smallskip

We fix $c=-\deg(q_*\al O_{X_s})+d$. % and denote by $$\sr H_\theta:T^*\al U_X^{\Gamma,\theta}(r,d)\lra \al U_X^{\Gamma,\theta}(r,d)\times \al W^\theta$$ the map that associate to $(E,\phi)$ the pair $(E,\al H_\theta(\phi))$.
In the following results, we need to use the notion of very stable bundles (for more details about this see \cite{Z4}). In a natural way, we generelize this notion to the $\Gamma-$invariant vector bundles.
\smallskip

A $\Gamma-$invariant  bundle $(E,\psi)$ is called \emph{very stable} if for any $\phi\in H^1(X,{\rm End}(E))^\Gamma$, we have $$\al H_\theta(\phi)=0\Rightarrow \phi=0.$$

\begin{prop}\label{nilpotent}
    The nilpotent cone $\Lambda^\theta\subset T^*\al U_X^{\Gamma,\theta}(r,d)$ is isotropic in $T^*\al U_X^{\Gamma,\theta}(r,d)$. In particular, the locus of very stable $\Gamma-$invariant vector bundles is dense in $\al U_X^{\Gamma,\theta}(r,d)$.
\end{prop}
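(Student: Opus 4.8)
The plan is to take the isotropy as the geometric heart of the statement and then derive the density of very stable bundles as a formal dimension count. Throughout I use that $T^*\al U_X^{\Gamma,\theta}(r,d)$ carries its canonical algebraic symplectic form $\omega$, that $\dim \al W^\theta = \dim \al U_X^{\Gamma,\theta}(r,d)=:n$ by Theorem \ref{hitchin}, and that $\al H_\theta$ is an algebraic completely integrable system whose generic fibre is a Lagrangian abelian variety, namely an open subset of $\x{Pic}^{c,\tilde\theta}(X_s)$, by Theorem \ref{main1}.

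First I would record that every fibre of $\al H_\theta$ is coisotropic. Since $\al H_\theta$ is a completely integrable system, the pullbacks of linear coordinates on $\al W^\theta$ Poisson-commute; writing $f_\alpha$ for these functions, the Hamiltonian vector fields $X_{f_\alpha}$ span the symplectic orthogonal $(\ker d\al H_\theta)^\perp$ and satisfy $X_{f_\alpha}(f_\beta)=\{f_\alpha,f_\beta\}=0$, so $(\ker d\al H_\theta)^\perp \subseteq \ker d\al H_\theta$ at every smooth point of a fibre. In particular $\Lambda^\theta=\al H_\theta^{-1}(0)$ is coisotropic, so each of its irreducible components has dimension at least $n$; moreover a component is isotropic (hence Lagrangian) precisely when its dimension equals $n$.

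The main obstacle is the reverse inequality $\dim \Lambda^\theta \leqslant n$, i.e.\ that the nilpotent cone does not jump up in dimension over the special value $0\in\al W^\theta$ (upper semicontinuity of fibre dimension only yields $\geqslant n$). To control this I would exploit the conical structure: the scaling action $m_t\cdot(E,\phi)=(E,t\phi)$ preserves $\Lambda^\theta$, the map $\al H_\theta$ is $\mathbb{C}^*$-equivariant with strictly positive weights on $\al W^\theta$ (the $i$-th factor has weight $i$), and $\al H_\theta$ is proper. This is exactly the input of Laumon's theorem on the global nilpotent cone, and I would carry out its $\Gamma$-equivariant analogue: properness together with the positive weights forces $\Lambda^\theta$ to be the locus contracted to the zero section as $t\to 0$, and a specialization of the generic Lagrangian fibre gives $\dim\Lambda^\theta\leqslant n$. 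Combined with the coisotropy step, every component of $\Lambda^\theta$ has dimension exactly $n$ and is Lagrangian; in particular $\Lambda^\theta$ is isotropic.

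Finally I would deduce density. Let $p:\Lambda^\theta\to \al U_X^{\Gamma,\theta}(r,d)$ be the projection; the zero section $E\mapsto(E,0)$ is a component of dimension $n$, and a bundle fails to be very stable exactly when it lies in the image $Z$ of the union $\Lambda'$ of the remaining components, whose generic point carries a nonzero nilpotent Higgs field. Each such $\Lambda'$ is $\mathbb{C}^*$-invariant of dimension $n$, and its fibres over $Z$ are scaling-invariant cones containing a nonzero point, hence of dimension $\geqslant 1$; therefore $\dim Z\leqslant n-1$. Thus $Z$ is a proper closed subset and its complement, the very stable locus, is open and dense in $\al U_X^{\Gamma,\theta}(r,d)$. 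I expect the only delicate point to be the upper bound on $\dim \Lambda^\theta$; the coisotropy and the final count are routine once the integrable-system structure of Theorem \ref{main1} is in hand.
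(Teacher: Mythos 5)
The paper offers no argument here beyond the citation of \cite[Theorem 4.8]{Z}, and your overall strategy --- isotropy of $\Lambda^\theta$ via the scaling $\mathbb{C}^*$-action and the integrable-system structure, then density of the very stable locus by a fibre-dimension count on the components of $\Lambda^\theta$ other than the zero section --- is the standard Laumon/Ginzburg route and is essentially what that reference carries out. Your coisotropy observation is correct, and your final step (each component of $\Lambda'$ is conical of dimension $n$ with fibres of dimension $\geqslant 1$ over its image, so the non-very-stable locus has dimension $\leqslant n-1$) is fine.

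The gap sits exactly at the step you yourself flag as delicate, and the mechanism you propose for it does not work as stated. First, ``a specialization of the generic Lagrangian fibre gives $\dim\Lambda^\theta\leqslant n$'' runs in the wrong direction: specialization and semicontinuity only bound special fibres from \emph{below}, as you note one sentence earlier. The correct way to extract the conclusion from the conical structure is Ginzburg's lemma: a closed $\mathbb{C}^*$-stable subvariety of a symplectic variety on which $\omega$ has positive weight, all of whose orbits admit limits at $t\to 0$ inside the subvariety, is isotropic; no comparison with the generic fibre is needed, and the bound $\dim\Lambda^\theta\leqslant n$ is then a consequence of isotropy, not an input to it. Second, the properness you invoke fails on $T^*\al U_X^{\Gamma,\theta}(r,d)$ itself: the Hitchin map is proper on the moduli space of semistable ($\Gamma$-invariant) Higgs pairs, of which the cotangent bundle is only an open subset, and the nilpotent cone inside $T^*\al U_X^{\Gamma,\theta}(r,d)$ need not be complete (there are nilpotent Higgs pairs whose underlying bundle is unstable). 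To repair this you must either pass to the moduli of $\Gamma$-invariant semistable Higgs bundles --- whose construction and the properness of whose Hitchin map have to be supplied, e.g.\ through the parabolic correspondence of Appendix \ref{Seshadri correspondence} and Yokogawa's results --- prove isotropy of the full nilpotent cone there and restrict (a subvariety of an isotropic variety is isotropic), or else bypass properness altogether and run Laumon's original estimate via the canonical filtration attached to a nilpotent Higgs field, checked to be $\Gamma$-equivariant. Either route is standard, but as written the central inequality is not established; a minor further point is that invoking Theorem \ref{main1} is unnecessary once Ginzburg's lemma is used, and it inverts the logical order of the paper.
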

\begin{proof}
    The proof is similar to \cite[Theorem $4.8$]{Z}.
\end{proof}

 \begin{theo}\label{main1}
    For any smooth type $\theta$ there exists a type $\tilde\theta$ of $\Gamma-$invariant line bundles on the spectral curve $X_s$, such that the  fiber $\al H_\theta^{-1}(s)$ is identified with a non-empty open subset of ${\rm Pic}^{c,\tilde \theta}(X_s)$ and $$\dim{\rm Pic}^{c,\tilde \theta}(X_s)=\dim\al U_X^{\Gamma,\theta}(r,d).$$ 
\end{theo}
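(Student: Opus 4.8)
The plan is to establish a $\Gamma$-equivariant version of the Beauville--Narasimhan--Ramanan spectral correspondence. Recall that for an integral spectral curve $X_s$, the classical BNR correspondence identifies the Hitchin fiber $\al H^{-1}(s)$ (over the full moduli of bundles) with line bundles $L$ on $X_s$ such that $q_*L$ has the prescribed rank and degree; the degree $c=-\deg(q_*\al O_{X_s})+d$ is forced by the requirement $\deg(q_*L)=d$. First I would observe that since $X_s$ carries a $\Gamma$-action (as noted before Lemma \ref{negative}) and $q:X_s\ra X$ is $\Gamma$-equivariant, the pushforward $q_*L$ inherits a $\Gamma$-linearization whenever $L$ does. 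Thus the correspondence $L\mapsto (q_*L,\phi)$, where $\phi$ is multiplication by the tautological section $x$, restricts to a map from $\Gamma$-invariant line bundles on $X_s$ to $\Gamma$-invariant Higgs pairs on $X$. The heart of the argument is to track exactly which $\Gamma$-linearization on $X_s$-line bundles produces the fixed type $\theta$ downstairs, thereby defining $\tilde\theta$.

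The key steps, in order, are as follows. First I would verify that for a smooth type $\theta$ (Theorem \ref{smooth}), a generic spectral curve $X_s$ is smooth and integral, so that the classical BNR machinery applies and $\al H^{-1}(s)$ is identified with an open subset of $\x{Pic}^c(X_s)$. Second, I would analyze the local structure of the $\Gamma$-action on $X_s$ over a ramification point $p\in R$: since $\Gamma_p$ is cyclic generated by $\gamma_p$ acting on the fiber $K_{X,p}$ by $\xi_p$, the induced action on the points of $q^{-1}(p)\subset X_s$ permutes the sheets, and the stabilizer of a point over $p$ acts on the fiber of $L$ there by a root of unity. The eigenvalue data of this induced action is precisely what must be matched against $\theta$. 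Third, I would compute how $q_*$ transports the local linearization weights of $L$ at points of $q^{-1}(p)$ into the eigenvalue multiplicities $k_{p,l}$ of the linearization $\psi_{\gamma_p}$ on $(q_*L)_p=E_p$; this is the definition of $\tilde\theta$ and must be set up so that $q_*L$ lands in the component $\al U_X^{\Gamma,\theta}(r,d)$. Fourth, the dimension count $\dim\x{Pic}^{c,\tilde\theta}(X_s)=\dim\al U_X^{\Gamma,\theta}(r,d)$ should follow by combining Theorem \ref{dim} with the dimension formula for $\Gamma$-invariant Picard varieties on $X_s$, or more cleanly by invoking part $(ii)$ of Theorem \ref{hitchin} together with the fact that the fiber dimension equals $\dim\al W^\theta$.

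The main obstacle I expect is the third step: correctly matching the local $\Gamma$-representation theory upstairs on $X_s$ with the prescribed type $\theta$ downstairs. Over a ramification point the spectral curve can itself be ramified or have several sheets permuted by $\Gamma_p$, and the induced linearization on the fiber of $L$ interacts with the valuations computed in Lemma \ref{formphi} and the function $\delta_p$. Precisely because $\theta$ is a smooth type, Remark \ref{Yangsmooth} tells us the Young diagram is (nearly) rectangular, which should make the eigenvalue bookkeeping tractable: the near-rectangular shape forces the local monodromy of $X_s$ over $p$ to be of a uniform cyclic type, so the sheets over $p$ carry a single coherent $\Gamma_p$-action and $\tilde\theta$ is determined by a clean shift of the weights of $\theta$. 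I would make this explicit by writing the local equation $x^{e_p}+\cdots$ of $X_s$ over $p$ and reading off the action of $\gamma_p$ on each branch. Once $\tilde\theta$ is pinned down, showing the fiber is identified with an \emph{open} subset of $\x{Pic}^{c,\tilde\theta}(X_s)$ (rather than the whole variety) follows from the stability/openness arguments of the classical case, using Proposition \ref{nilpotent} to guarantee the relevant locus is non-empty and dense.
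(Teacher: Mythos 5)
Your proposal follows essentially the same route as the paper: a $\Gamma$-equivariant BNR correspondence giving $L\mapsto q_*L$, a local case analysis of the $\Gamma_p$-action on $q^{-1}(p)$ guided by the (near-)rectangular Young diagrams of Remark \ref{Yangsmooth} to pin down $\tilde\theta$, and a dimension count via the $\Gamma$-invariant Picard variety (the paper computes $\dim{\rm Pic}^{c,\tilde\theta}(X_s)$ as the genus of $X_s/\Gamma$ by Hurwitz in each of the three cases and matches it against Theorem \ref{dim}). The only substantive part you leave unexecuted is the eigenvalue bookkeeping at the fixed points, which the paper resolves exactly as you anticipate, by splitting into the three diagram shapes and reading off the $\Gamma_p$-action on the permuted sheets and on the module $L\otimes\al O_{\tilde p}/\ak m^{e_p}$ at the unique fixed point.
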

\begin{proof}
Let $s\in \al W^\theta$ such that the associated spectral curve $X_s$ is smooth. Let $(E,\psi,\phi)\in\al H_\theta^{-1}(s)$. By \cite[Proposition $3.6$]{BNR}, the pair $(E,\phi)$ corresponds  to a unique line bundle $L$ over $X_s$ such that $q_*L=E$. Since the map $q:X_s\ra X$ is  $\Gamma-$equivariant map and $E$ is $\Gamma-$invariant, we deduce that $L$ is also $\Gamma-$invariant.

Now, we determine the types $\tilde\theta$ associated with $\theta$. The Yang diagram of $\theta$ has three forms, as given in Remark (\ref{Yangsmooth}).

In the first case, for any $p\in R$, the matrix of $\psi_{\gamma_p}$ is a diagonal matrix where each $e_p$-th root of unity appears $r/e_p$ times. Since zero is not a point in $X_s$ in this case, the action of $\Gamma$ on $X_s$ has no fixed points. Hence, there is only one trivial type which we denoted $\tilde\theta$.%, namely $\theta_p = (1)$ for all $p$.

Let $L$ be a $\Gamma$-invariant line bundle on $X_s$, and denote $E = q_*L$. Then,
\[
E_p = \bigoplus_{x\in q^{-1}(p)} L_x.
\]
Since none of the points $x \in q^{-1}(p)$ are fixed, the linearization on $E_p$ takes the form:
\[
\begin{pmatrix}
     A & 0 & \cdots  & 0\\
     0 & A & 0 &  \vdots\\
     \vdots & 0 & \ddots &  0 \\
    % \vdots & 0 & 0 & \ddots & 0\\
     0 &   \cdots & 0 &  A
\end{pmatrix},
\]
where each block $A$ is an $e_p \times e_p$  matrix with ones under the diagonal and at the top right corner. Specifically, 
\[
A = \begin{pmatrix}
   0 & \cdots & 0 & 1 \\
   1  & 0  & \ddots & 0   \\
   \vdots & \ddots & 0 & \vdots\\
   0 & \cdots & 1 & 0
\end{pmatrix}
\sim
\begin{pmatrix}
     \xi_{p}^1 & 0 & \cdots & 0 \\
     0 & \xi_{p}^2  & 0 & \vdots \\
     \vdots & 0 & \ddots & 0 \\
     0 & \cdots & 0 & \xi_{p}^{e_p}
\end{pmatrix}.
\]
This shows that each eigenvalue of $\psi_{\gamma_p}$ appears exactly $r/e_p$ times.

In the second case,  namely when $r-\delta_p(r)e_p=-1$, the type $\theta$ satisfies
\[
\theta_p = \left(\frac{r+1}{e_p}, \dots, \frac{r+1}{e_p} -1, \dots, \frac{r+1}{e_p}\right).
\]
In other words, all eigenvalues except one have multiplicity $\frac{r+1}{e_p}$, while the remaining eigenvalue, say for simplicity $\xi_{p}^{e_p-1}$, has multiplicity $\frac{r+1}{e_p} -1$.\\
Now, the fiber of $q:X_s\ra X$ over $p$ has one fixed point $\tilde{p}$ with multiplicity $e_p-1$, and the remaining $r-e_p+1=(\delta_p(r)-1)e_p$ points are not fixed by $\Gamma_p$, this produces $\delta_p(r)-1$ orbits, each one produces all the eigenvalues from $\xi_p$ to $\xi_p^{e_p}$ as explained with the matrix $A$ in the last case.\\ 
Let $L$ be a line bundle with trivial type, that's $\tilde\theta_{\tilde p}=(1,0,\cdots,0)$. The fiber of the bundle $E=q_*L$ over $p$ can be identified with $$p=\left(\bigoplus_{x\in q^{-1}(p)\sm \{\tilde p\}} L_x\right)\oplus L_{(e_p-1)\tilde{p}},$$
where $L_{(e_p-1)\tilde{p}}=L\otimes \al O_{\tilde p}/\ak m^{e_p}$ is free rank one  module over $\al O_{\tilde p}/\ak m^{e_p}\cong\bb K[t]/\gen{t^{e_p}}$, where $\ak m$ is the maximal ideal associated to $\tilde p$.  One sees that the action of $\Gamma_p$ on this module is given by $$(a_0,\cdots,a_{e_p-1})\mapsto (a_0,\xi_p a_1,\cdots, \xi_p^{e_p-1}a_{e_p-1}).$$
It follows that this produces the eigenvalues $\xi_p,\cdots,\xi^{e_p-1}$. Finally, the type of $E$ is exactly what we looking for.\\

In the third case, the type $\theta$ satisfies
\[
\theta_p = \left(\frac{r-1}{e_p}, \dots, \frac{r-1}{e_p} +1, \dots, \frac{r-1}{e_p}\right).
\]
In other words, all eigenvalues except one have multiplicity $\frac{r-1}{e_p}$, while the remaining eigenvalue, say $\xi_p^j$, has multiplicity $\frac{r-1}{e_p} +1$.

Note that $r - e_p\delta_p(r) = 1 - e_p < 0$ (see Remark (\ref{Yangsmooth})). Hence, the point zero belongs to $X_s$, precisely to the fiber over $p$, and it is the only fixed point under the $\Gamma_p$ action.

Now, let $\tilde \theta$ be a type of $\Gamma$-invariant line bundle on $X_s$ defined by
$$
\tilde\theta_p = (0, \dots, 0, 1, 0, \dots, 0),
$$
where $1$ is in the $j$-th position. Let $L$ be a $\Gamma$-invariant line bundle on $X_s$ of type $\tilde\theta$ and define $E = q_*L$. As observed earlier, 
\[
E_p = \bigoplus_{x\in q^{-1}(p)} L_x,
\]
and all points in the fiber except one are not fixed. Hence, the matrix of the linearization on $E_p$ is given by:
$$
\begin{pmatrix}
     \xi_{p,j} & 0 & \cdots & \cdots & 0\\
     0 & A & 0 & \cdots & \vdots \\
     0 & 0 & \ddots & 0 & 0 \\
     \vdots & \cdots & 0 & \ddots & 0\\
     0 & 0 & \cdots & 0 & A
\end{pmatrix},
$$
where $A$ is as defined earlier. We conclude that all eigenvalues have multiplicity $\frac{r-1}{e_p}$, except for $\xi_{p,j}$, which has multiplicity $\frac{r-1}{e_p} + 1$.

Now, we have ${\rm Pic}^{c,\tilde\theta}(X_s)\cong{\rm Pic}^{c/n}(X_s/\Gamma)$ whose dimension equals the genus of $X_s/\Gamma$. \\ In the first case, the cover $X_s\ra X_s/\Gamma$ has no ramification points, by Hurwitz formula, the smooth curve $X_s/\Gamma$ has genus equal to 
\begin{align*}
    g_{X_s/\Gamma}&=\frac{1}{n}(g_{X_s}-1)=\frac{r^2}{n}(g_X-1) \\ &=r^2(g_Y-1)+\frac{r^2m}{2n} \\& =r^2(g_Y-1)+\frac{r^2}{2n}\sum_{p\in R}e_p-1 &(\x{Since }m=\deg R) \\& =r^2(g_Y-1)+1+\frac{r^2}{2}\sum_{q\in B}\frac{(e_q-1)}{e_q} \\ &= \dim \al U_X^{\Gamma,\theta}(r,d) &(\x{By Theorem (\ref{dim}}))
\end{align*}

In the second case, over each point $p\in R$, there is one fixed point of ramification index equals to $e_p-1$. It follows that  the genus of  the  curve $X_s/\Gamma$  is 
 \begin{align*}
    g_{X_s/\Gamma}&=\frac{1}{n}(g_{X_s}-1)+1-\frac{1}{2n}\sum_{p\in R}e_p-1\\ &=r^2(g_Y-1)+1+\frac{r^2m}{2n}-\frac{1}{2n}\sum_{p\in R}e_p-1 \\ &=r^2(g_Y-1)+1+\frac{1}{2n}\sum_{p\in R}r^2(e_p-1)-(e_p-1) \\& =r^2(g_Y-1)+1+\frac{1}{2}\sum_{q\in B}\frac{(r^2-1)(e_p-1)}{e_p}  \\ &= \dim \al U_X^{\Gamma,\theta}(r,d)
\end{align*}

 In the third case, there is only one  fixed point in $X_s$ over each $p\in R$ of ramification index $e_p-1$, hence by Hurwitz formula we get $$2(g_{X_s}-1)=2n(g_{X_s/\Gamma}-1)+\sum_{p\in R}e_p-1.$$ It follows that
 \begin{align*}
    g_{X_s/\Gamma}&=\frac{1}{n}(g_{X_s}-1)+1-\frac{1}{2n}\sum_{p\in R}e_p-1\\ &=r^2(g_Y-1)+1+\frac{r^2m}{2n}-\frac{1}{2n}\sum_{p\in R}e_p-1 \\ &=r^2(g_Y-1)+1+\frac{1}{2n}\sum_{p\in R}r^2(e_p-1)-(e_p-1) \\& =r^2(g_Y-1)+1+\frac{1}{2}\sum_{q\in B}\frac{(r^2-1)(e_p-1)}{e_p}  \\ &= \dim \al U_X^{\Gamma,\theta}(r,d)
\end{align*}

\end{proof}
\begin{rema}
    Note that the smooth curve $X_s/\Gamma$ can be seen as the normalization of the singular spectral curve $Y_s$
 over $Y$. \end{rema}
\begin{theo}\label{main2}
    The pushforward rational map $$q_*:{\rm Pic}^{c,\tilde \theta}(X_s)\dashrightarrow \al U_X^{\Gamma,\theta}(r,d)$$ is dominant. In particular, the moduli space $\al U_X^{\Gamma,\theta}(r,d)$ is connected.
\end{theo}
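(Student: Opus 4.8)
The plan is to prove dominance by a dimension count combined with the generic injectivity that the Hitchin-fiber description already provides, and then deduce connectedness from the irreducibility of the source. First I would observe that Theorem \ref{main1} furnishes, for a general $s\in\al W^\theta$ with smooth spectral curve $X_s$, an identification of the fiber $\al H_\theta^{-1}(s)$ with a non-empty open subset of ${\rm Pic}^{c,\tilde\theta}(X_s)$, under which the pushforward $q_*$ sends an invariant line bundle $L$ to the invariant bundle $E=q_*L$ together with its induced linearization. Thus the image of $q_*$ meets the fiber of the projection $T^*\al U_X^{\Gamma,\theta}(r,d)\to \al U_X^{\Gamma,\theta}(r,d)$ over every $E$ that arises as such a pushforward, and the content of dominance is that these $E$ sweep out a dense subset of the moduli space.

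The key mechanism is very stability, supplied by Proposition \ref{nilpotent}: the locus of very stable $\Gamma$-invariant bundles is dense in $\al U_X^{\Gamma,\theta}(r,d)$. For a very stable $E$ the restriction of $\al H_\theta$ to the fiber $T_E^*\al U_X^{\Gamma,\theta}(r,d)\cong H^0(X,{\rm End}(E)\otimes K_X)^\Gamma$ is a finite morphism onto its image (the only point over $0$ is $0$, so the fibers are finite and the morphism is proper onto a closed cone of the same dimension), and since $\dim \al W^\theta=\dim\al U_X^{\Gamma,\theta}(r,d)$ by Theorem \ref{hitchin}, this image is all of $\al W^\theta$ for dimension reasons. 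Hence for a dense set of bundles $E$ and for general $s$ there exists a $\Gamma$-invariant Higgs field $\phi$ with $\al H_\theta(E,\phi)=s$, which by the BNR correspondence of Theorem \ref{main1} produces an invariant line bundle $L$ on $X_s$ of type $\tilde\theta$ with $q_*L=E$. Therefore a dense subset of $\al U_X^{\Gamma,\theta}(r,d)$ lies in the image of $q_*$, proving dominance. Finally I would combine this with the equality $\dim{\rm Pic}^{c,\tilde\theta}(X_s)=\dim\al U_X^{\Gamma,\theta}(r,d)$ to confirm that $q_*$ is generically a dominant map between varieties of the same dimension.

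For connectedness I would argue that ${\rm Pic}^{c,\tilde\theta}(X_s)\cong{\rm Pic}^{c/n}(X_s/\Gamma)$ is a torsor over the connected abelian variety ${\rm Pic}^0(X_s/\Gamma)$, hence irreducible, so its dominant rational image is irreducible and in particular connected; since that image is dense and $\al U_X^{\Gamma,\theta}(r,d)$ is the closure of the smooth stable locus, the whole moduli space is connected. The main obstacle I anticipate is the passage from ``$\al H_\theta$ restricted to a generic cotangent fiber is dominant onto $\al W^\theta$'' to ``for a fixed general $s$ the line bundle $L$ can be chosen of the prescribed invariant type $\tilde\theta$ and of the correct degree $c$.'' This requires checking that the very stable locus and the smooth-spectral-curve locus intersect compatibly — i.e.\ that one may choose $E$ very stable and $s$ general simultaneously so that the resulting $L$ lands in the specific component ${\rm Pic}^{c,\tilde\theta}(X_s)$ rather than some other invariant component — which is essentially a semicontinuity/openness argument ensuring the relevant incidence locus in $\al U_X^{\Gamma,\theta}(r,d)\times\al W^\theta$ dominates the second factor and meets the distinguished Picard component.
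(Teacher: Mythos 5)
Your proposal is correct and follows essentially the same route as the paper: both arguments use the density of very stable $\Gamma$-invariant bundles (Proposition \ref{nilpotent}) together with the dimension equality of Theorem \ref{hitchin} to show that the combined map $T^*\al U_X^{\Gamma,\theta}(r,d)\to\al U_X^{\Gamma,\theta}(r,d)\times\al W^\theta$ is dominant, and then invoke the identification of $\al H_\theta^{-1}(s)$ with an open subset of ${\rm Pic}^{c,\tilde\theta}(X_s)$ from Theorem \ref{main1} to conclude. The component-matching worry you raise at the end is already absorbed by Theorem \ref{main1}, which places the entire fiber $\al H_\theta^{-1}(s)$ inside the distinguished component ${\rm Pic}^{c,\tilde\theta}(X_s)$.
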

\begin{proof}
    Consider the map $$\sr H_\theta:T^*\al U_X^{\Gamma,\theta}(r,d)\lra \al U_X^{\Gamma,\theta}(r,d)\times \al W^\theta.$$
This map is dominant. Indeed by Proposition (\ref{nilpotent}), there exists a very stable $\Gamma-$invariant bundle of any given type, then the result follows from the dimension theorem. Now, if ${\rm pr}_1:\al U_X^{\Gamma,\theta}(r,d)\times \al W^\theta\ra \al U_X^{\Gamma,\theta}(r,d)$ and ${\rm pr}_2:\al U_X^{\Gamma,\theta}(r,d)\times \al W^\theta\ra W^\theta$ are  the first and second  projections, one sees that $$\sr H_\theta\circ{\rm pr}_1:\al H^{-1}(s)\lra\al U_X^{\Gamma,\theta}(r,d) $$ is dominant too. On the other hand, we have 
$$(\sr H_\theta\circ{\rm pr}_2)^{-1}(s)=\al H_\theta^{-1}(s).$$ 
But the fiber $\al H_\theta^{-1}(s)$ is identified with a non-empty open subset of ${\rm Pic}^{c,\tilde \theta}(X_s)$ by Theorem (\ref{main1}). The result follows.
\end{proof}

\section{Appendix: Seshadri correspondence} \label{Seshadri correspondence}

Seshadri in \cite{Seshadri} gave a correspondence between the moduli space of invariant bundles over a curve equipped with a group action and the moduli space of parabolic bundles over the quotient curve. In this section, we recall the definition of parabolic bundles and the Seshadri correspondence. The maine references of this appendix are Bhosle  \cite{bhosle} and Mheta-Seshadri \cite{Seshadri}.\\

Let $Y$ be a smooth projective algebraic curve. 
\begin{defi}[Parabolic Bundles] Let $E$ be a vector bundles on $Y$, a quasiparabolic structure on $E$ supported at a point $y\in Y$ is a decreasing sequence of linear subspaces called the flag 

$$ E_y = F_y^1E  \supseteq  F_y^2 E \supseteq \ldots \supseteq F^{\ell_y}_y E \supseteq F_y^{\ell_y+1} E =\{0\},$$
where $\ell_y$ is the length of the flag, we define the flag type by the sequence $$k_{y,i} := \dim F_y^{i}E -  \dim F_y^{i+1}E.$$  
A parabolic structure in $E$ is given by a quasiparabolic structure together with a sequence of real numbers, called the parabolic weights:  $$0 \leq \alpha_{y,1} \leq \alpha_{y,2} \leq \dots \leq \alpha_{y,\ell_y} < 1.$$
We denote a bundle $E$ equipped with a parabolic structure by $E_*$.
\end{defi} 

Let $B=\{y_1,y_2, \dots ,  y_N \}$ a finit subset in $Y$. A parabolic type on $B$ is the data of flag type and  weights at each point in $B$.
\begin{defi} Let $E$ a vector bundle on $Y$ with a fixed parabolic structure on $B$ of a fixed parabolic type. Then we define the parabolic degree
$$\deg_{par}(E):=\deg(E)+\sum_{y \in B} \sum_{i=1}^{\ell_y} \alpha_{y,j} \ k_{y,j},$$
and the parabolic slop
$$
\mu_{par}(E):= \frac{\deg_{par}(E)}{rk(E)}.
$$  
\end{defi}
Let $E_*$ be a parabolic bundle over $B$ of a fixed type. $E_*$ is said to be semistable (res. stable) if for any subbundle $F$ of $E$ equipped with the natural induced structure satisfy the slop inequality 
\[
    \mu_{par}(F) \leqslant \mu_{par}(E) \quad (\text{resp. } \mu_{par}(F) < \mu_{par}(E)).
\]
In \cite{MS} Mehta and Seshadri construct the moduli space $\mathcal{M}^{par}_{Y,B}(r,d)$ of semistable parabolic bundles over $Y$ with fixed rank, degree and parabolic type, and show that is a projective variety. They also proved that the  locus of stable parabolic  bundles is an open smooth subset.
\subsection*{Seshadri correspondence} \noindent \smallskip

Let $\pi: X \to Y $ be a Galois covering of smooth curves with Galois group $\Gamma$, and let $E$ be a $ \Gamma-$invariant vector bundle over $X$. For a point $p \in R$ and $\xi \in \Gamma_p \simeq \mathbb{Z}_{e_p}
$, a generator of the isotropy group, we get an eigenspace decomposition of $E_p = \bigoplus_{i=1}^{f_p} E_{\xi^{n_i}}$. After a finite sequence of Hecke modifications with respect to the eigensubspaces, one obtains a subsheaf $E'$ is a subsheaf of $E$ which is locally free and of the same rank, where the $\Gamma_p$-action is trivial. Now, if we apply this to any point of the ramification divisor, we get a vector bundle with trivial action, hence it descends to a vector bundle over $Y$ by Kumpf's Lemma. In fact, the vector bundle $E'$ is given by $\pi^* \left( \pi_*^{\Gamma}( E )\right)$. The vector bundle $\pi^* \left( \pi_*^{\Gamma}( E )\right)$ is equipped with the induced direct sum decomposition, hence the invariant pushforward is equipped with a this decomposition.  \smallskip

The vector bundle $\pi_*^{\Gamma}( E )$ over $Y$ is equipped with a parabolic structure over the branched locus $B=\pi(R)$, as follows: 
\begin{itemize}
    \item  The quasiparabolic structure:
$$ F_y^{j}\left( \pi_*^{\Gamma}( E ) \right):= \bigoplus_{i=0}^{j} E'_{\xi^{n_j}}$$ 
 \item The parabolic weights is given by:  $$\alpha_{y,j}:= \frac{j}{e_p}.
$$
\end{itemize}
This associates to a type $\theta$ a parabolic type and denotes by $\mathcal{M}^{par}_{Y,B}(r,d)$ the space of parabolic bundles of this type. 
\begin{theo}[Seshadri] The invariant pushforward map 
$$
\begin{array}{ccccc}  
 &\pi_*^{\Gamma} : &\al U_X^{\Gamma,\theta}(r,d)   & \longrightarrow &  \mathcal{M}^{par}_{Y,B}(r,d)  \\ 
& & (E, \psi)  & \longmapsto & \pi_*^{\Gamma}(E)_*
\end{array}
$$
is an isomorphism of varieties.
\end{theo}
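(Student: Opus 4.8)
The plan is to establish the Seshadri correspondence as an isomorphism by constructing an explicit inverse to $\pi_*^\Gamma$ and checking that both maps are morphisms of varieties compatible with the (semi)stability notions. First I would verify that $\pi_*^\Gamma$ is well defined at the level of moduli: given a $\Gamma$-invariant bundle $(E,\psi)$ of type $\theta$, the invariant pushforward $\pi_*^\Gamma(E)$ is locally free on $Y$ (this is where I use that the $\Gamma_p$-action is cyclic and that after the Hecke modifications described above the action becomes trivial, so the descended object is a genuine vector bundle over $Y$), and the eigenspace decomposition $E_p=\bigoplus_{i=1}^{f_p} E_{\xi^{n_i}}$ induces precisely the quasiparabolic flag $F_y^j=\bigoplus_{i=0}^{j} E'_{\xi^{n_j}}$ with weights $\alpha_{y,j}=j/e_p$. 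The central point here is to match the local type $\theta_p=(k_{p,1},\dots,k_{p,e_p})$ with the flag type $k_{y,i}=\dim F_y^i - \dim F_y^{i+1}$, so that a fixed $\theta$ corresponds to a fixed parabolic type and $\pi_*^\Gamma$ lands in $\mathcal M^{par}_{Y,B}(r,d)$.

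Next I would construct the inverse functor, sending a parabolic bundle $E_*$ on $(Y,B)$ back to a $\Gamma$-invariant bundle on $X$. Concretely, one pulls back by $\pi$ and then performs Hecke modifications along the ramification divisor $R$ dictated by the parabolic weights $\alpha_{y,j}=j/e_p$, producing a sheaf on $X$ with a natural $\Gamma$-linearization whose local type at $p$ reproduces $\theta_p$. The verification that this is inverse to $\pi_*^\Gamma$ amounts to checking, point by point over each branch point $q=\pi(p)$, that applying the modifications and then taking invariant pushforward (or vice versa) returns the original data; this is a local computation in $\mathbb C[[t]]$ using the cyclic structure of $\Gamma_p$ and the identification $\pi^*(\pi_*^\Gamma E)\cong E'$ noted above.

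The two functors being mutually inverse on points gives a bijection; to upgrade this to an \emph{isomorphism of varieties} I would argue that both $\pi_*^\Gamma$ and its inverse are algebraic morphisms. This follows because the Hecke modifications and invariant pushforward can be carried out in families over an arbitrary base scheme $S$, so each construction defines a morphism of the corresponding moduli functors, and hence of the coarse (or fine, on the stable locus) moduli spaces. Finally I would check the equivalence of stability conditions: a $\Gamma$-invariant subbundle $F\subset E$ corresponds exactly to a parabolic subbundle of $\pi_*^\Gamma(E)$ with the induced structure, and the Galois-equivariant Riemann--Roch / degree bookkeeping shows $\mu(F)<\mu(E)$ if and only if $\mu_{par}(\pi_*^\Gamma F)<\mu_{par}(\pi_*^\Gamma E)$; the constant relating $\deg F$ to $\deg_{par}$ of its image is precisely encoded by the weights $j/e_p$. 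Thus semistable objects correspond to semistable objects, and the bijection respects $S$-equivalence, yielding an isomorphism of the moduli varieties.

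I expect the main obstacle to be the local matching at the ramification points: one must show that the Hecke modification prescribed by the parabolic weights inverts the eigenspace-to-flag construction on the nose, including the correct cyclic ordering of eigenvalues $\xi_p^l$ against the decreasing flag, and that this compatibility holds in families (not just fiberwise). Getting the degree/parabolic-degree dictionary exactly right, so that the two slope inequalities are equivalent rather than merely both monotone, is the delicate bookkeeping step on which the equivalence of stability — and hence the isomorphism of the full moduli spaces, not just the stable loci — ultimately rests.
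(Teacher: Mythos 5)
The paper itself offers no proof of this statement: it is recalled as a classical theorem and attributed to Seshadri (Mehta--Seshadri), with only the construction of the parabolic structure on $\pi_*^{\Gamma}(E)$ spelled out beforehand. So there is no in-paper argument to compare against; what you have written is a sketch of the standard proof from the literature, and as a sketch it is sound: the four ingredients you list (local freeness and flag-type matching for $\pi_*^{\Gamma}$, the inverse via pullback plus Hecke modifications along $R$, algebraicity in families, and the degree/parabolic-degree dictionary making the slope inequalities equivalent) are exactly the ones the actual proof requires, and you correctly identify the local computation over $\mathbb{C}[[t]]$ at ramification points as the crux. Two points deserve more care than your outline gives them. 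First, the ordering ambiguity you worry about (matching the cyclic order of the eigenvalues $\xi_p^l$ against the decreasing flag) is precisely what the paper's definition of ``type'' as an equivalence class under cyclic shifts is designed to absorb; your inverse construction should be checked to be well defined on that equivalence class, not on a chosen representative. Second, the paper's (semi)stability for $\Gamma$-invariant bundles is stated asymmetrically --- semistability is tested against \emph{all} subbundles, stability only against $\Gamma$-invariant ones --- whereas parabolic subbundles on $Y$ correspond only to $\Gamma$-invariant subsheaves on $X$; reconciling the two uses the uniqueness (hence automatic $\Gamma$-invariance) of the maximal destabilizing subbundle, a step your bookkeeping paragraph should make explicit before concluding that $S$-equivalence classes match.
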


%%%%%%%%%%%%%%%%%%%%%%%%%%%%%%%%%%%%%%%%%%%%%
%\begin{defi}
  %  A $\Gamma-$invariant bundle $E$ is called semistable (resp. stable) if for any $\Gamma-$invariant subbundle $F$  of $E$ we have $$\mu(F)\leqslant \mu(E) \;(\x{resp. } \mu(F)<\mu(E)).$$
%\end{defi}
%\begin{lemm}
 %   A $\Gamma-$invariant bundle is semistable if and only if the underline vector bundle is semistable.  
%\end{lemm}
%\begin{proof}
 %   \textcolor{red}{Similar to the case of $\sigma-$invariant. just use $\bigoplus_{\gamma\in \Gamma} \gamma^*F\rightarrow E$}  \textcolor{blue}{note clear for me; By Seshadri we know that $\Gamma-$invariant bundles over a curve $X$ are parabolic bundles over $Y:=X/\Gamma$ but if the parabolic bundle is stable does not implies that the bundle it self is stable (even semi-stable) .} \textcolor{red}{ But Does the Sechadri correspondance is compatible w.r.t the semisptaboility conditions?}
%\end{proof}

\bibliographystyle{alpha}
\bibliography{bib}
\end{document}